\documentclass[twoside, 11pt, leqno]{amsart}
 \usepackage{amsmath, amssymb, amsfonts, amsthm, amscd} \usepackage{mathrsfs}
 \usepackage[pdftex]{graphicx}

\pagestyle{plain}
\theoremstyle{plain}
\setcounter{page}{1}
\numberwithin{equation}{section}

\hyphenation{hej-sa}

\newtheorem{theorem}{Theorem}
\newtheorem{lemma}{Lemma}
\newtheorem{corollary}{Corollary}
\newtheorem{proposition}{Proposition}
\newtheorem{remark}{Remark}

\newcommand{\fracsm}[2]{\begin{matrix}\frac{#1}{#2}\end{matrix}}

\newcommand{\beq}{\begin{equation}}
\newcommand{\eeq}{\end{equation}}

\newcommand{\Reals}{\mathbb{R}}

\begin{document}

\title{Self-shrinkers with a rotational symmetry}
\author{Stephen J. Kleene}
\address{Stephen J. Kleene, Department of Mathematics, Johns Hopkins University, Baltimore, MD.}
\email{skleene@math.jhu.edu}
\author{Niels Martin M\o{}ller}
\address{Niels Martin M\o{}ller, Department of Mathematics, MIT, Cambridge, MA.}
\email{moller@math.mit.edu}

\begin{abstract}
In this paper we present a new family of non-compact properly embedded, self-shrinking, asymptotically conical, positive mean curvature ends $\Sigma^n\subseteq\Reals^{n+1}$ that are hypersurfaces of revolution with circular boundaries. These hypersurface families interpolate between the plane and half-cylinder in $\Reals^{n+1}$, and any rotationally symmetric self-shrinking non-compact end belongs to our family. The proofs involve the global analysis of a cubic-derivative quasi-linear ODE.

We also prove the following classification result: a given complete, embedded, self-shrinking hypersurface of revolution $\Sigma^n$ is either a hyperplane $\Reals^{n}$, the  round cylinder $\Reals\times S^{n-1}$ of radius $\sqrt{2(n-1)}$, the round sphere $S^n$ of radius $\sqrt{2n}$, or is diffeomorphic to an $S^1\times S^{n-1}$ (i.e. a "doughnut" as in \cite{Ang}, which when $n=2$ is a torus). In particular for self-shrinkers there is no direct analogue of the Delaunay unduloid family. The proof of the classification uses translation and rotation of pieces, replacing the method of moving planes in the absence of isometries.
\end{abstract}

\maketitle

\section{Introduction and statement of results}
We consider smooth $n$-dimensional hypersurfaces $\Sigma^n\subseteq\Reals^{n+1}$, $n\geq 2$, possibly with boundary $\partial\Sigma\neq\emptyset$, satisfying the (extinction time $T=1$) self-shrinker equation for mean curvature flow, away from $\partial\Sigma$,
\beq\label{SSEq}
H=\frac{\langle \vec{X},\vec{\nu}\rangle}{2},
\eeq
where $\vec{\nu}$ is the unit normal such that $\vec{H}= - H\vec{\nu}$.

\begin{theorem}\label{ThmConicalEndsWordyVersion}
In $\Reals^{n+1}$ there exists a 1-parameter family of non-compact smooth rotationally symmetric, embedded, positive mean curvature, asymptotically conical self-shrinking ends $\Sigma^n$ with boundary.

In fact for each rotationally symmetric cone $\mathscr{C}$ in $\{x_1\geq 0\}\subseteq\Reals^{n+1}$ with tip at the origin, of slope $\sigma>0$, there is a unique such self-shrinker $\Sigma_\sigma$, lying outside of $\mathscr{C}$, which is asymptotic to $\mathscr{C}$ as $x_1\to\infty$.
\end{theorem}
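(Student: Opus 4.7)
Setting up the ODE: by rotational symmetry, write $\Sigma$ as the revolution of a profile graph $u:[s_0,\infty)\to(0,\infty)$ about the $x_1$-axis. From the standard expressions for the meridian and parallel principal curvatures $-u''(1+u'^2)^{-3/2}$ and $[u\sqrt{1+u'^2}]^{-1}$, and the support function $\langle \vec{X},\vec{\nu}\rangle = (u-su')/\sqrt{1+u'^2}$, the self-shrinker equation~(\ref{SSEq}) becomes the quasi-linear ODE
\begin{equation}\label{ProfileODE}
2u'' \;=\; (1+u'^2)\!\left(\frac{2(n-1)}{u} + su' - u\right),
\end{equation}
whose right-hand side is cubic in $u'$. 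A cone $\mathscr{C}$ of slope $\sigma$ corresponds formally to the ``solution at infinity'' $u\equiv\sigma s$, and the task reduces, for each $\sigma>0$, to producing a unique smooth solution $u_\sigma$ with $u_\sigma(s)/s\to\sigma$ and $u_\sigma > \sigma s$, defined on a maximal interval $[s_0,\infty)$.

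Shooting from infinity: substituting $u = \sigma s + w$ into~(\ref{ProfileODE}) and keeping the leading terms in $w$, the dominant balance is the Euler-type equation
\[
sw' - w \;=\; -\frac{2(n-1)}{\sigma s},
\]
with unique decaying solution $w(s)=(n-1)/(\sigma s)$. Iterating this ansatz yields a formal series $u_\sigma(s) = \sigma s + \tfrac{n-1}{\sigma s} + \sum_{k\ge 1} a_k(\sigma)\,s^{-2k-1}$ with all coefficients determined by $\sigma$. To realize this series as a genuine solution I would fix $\sigma>0$, choose $R=R_\sigma$ large, and recast~(\ref{ProfileODE}) for $w$ as an integral fixed-point equation $w=\mathcal{T}_\sigma(w)$ on the weighted Banach space
\[
\mathcal{B}_\sigma \;=\; \Bigl\{w\in C^2([R,\infty))\,:\,\|w\|_{\mathcal{B}}:=\sup_{s\ge R}\bigl(s|w|+s^2|w'|+s^3|w''|\bigr)<\infty\Bigr\},
\]
where $\mathcal{T}_\sigma$ uses the explicit inverse of the Euler operator $sw'-w$ on decaying functions and absorbs the higher-order corrections on the right. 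The cubic-derivative remainder contributes quadratically in $\|w\|_{\mathcal{B}}$, so Banach's fixed-point theorem applies once $R$ is chosen large in terms of $\sigma$; this simultaneously yields existence and uniqueness of $u_\sigma$ on $[R,\infty)$.

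With $u_\sigma$ in hand, standard ODE continuation extends it backwards to a maximal interval $[s_0,\infty)$ on which $u_\sigma>0$; the resulting surface of revolution $\Sigma_\sigma$ is automatically smooth and embedded. The asymptotic expansion forces $u_\sigma(s)>\sigma s$ for large $s$, and a comparison / ODE-uniqueness argument propagates the inequality to all of $[s_0,\infty)$, verifying ``outside $\mathscr{C}$.'' Positivity of $H$ follows similarly: at infinity $H\sim (n-1)/(\sigma s\sqrt{1+\sigma^2})>0$, and the Jacobi-type equation for $H$ along a self-shrinker combined with a strong maximum principle forces $H>0$ throughout. Global uniqueness within the class of rotationally symmetric asymptotically conical ends of slope $\sigma$ then follows because any such solution lies in $\mathcal{B}_\sigma$ on some $[R',\infty)$, coincides with $u_\sigma$ there by the fixed-point uniqueness, and hence agrees with the maximal backward extension by ODE uniqueness.

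The principal obstacle is the contraction step: the coefficient $su'$ in~(\ref{ProfileODE}) is unbounded as $s\to\infty$, so generic ODE perturbation estimates do not apply directly, and the weighted norm on $\mathcal{B}_\sigma$ must be calibrated so that the explicit inversion of $sw'-w$ produces exactly the decay gain required to dominate the cubic-in-$u'$ remainder. Everything else — asymptotic matching, backward continuation, comparison with $\mathscr{C}$, positivity of $H$, and uniqueness — is a relatively standard consequence once this analytic step is in place.
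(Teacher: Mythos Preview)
Your outline is plausible and would likely succeed, but it diverges from the paper's argument in two substantive ways.

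\textbf{The integral identity versus linearization around the cone.} You linearize around $u=\sigma s$ and invert the Euler operator $sw'-w$ on decaying functions. The paper instead observes that $u_1(x)=x$ is an \emph{exact} solution of the homogeneous part of the frozen-coefficient linear equation
\[
u'' - \tfrac{x}{2}(1+(\varphi')^2)u' + \tfrac{1}{2}(1+(\varphi')^2)u = (n-1)\frac{1+(\varphi')^2}{\varphi},
\]
obtained by freezing $\varphi=u$. Variation of parameters with this pair of homogeneous solutions yields an \emph{exact} representation
\[
u(x)=\sigma x + 2(n-1)x\int_x^\infty \frac{1}{t^2}\int_t^\infty \frac{s(1+(u')^2)}{2u}\,e^{-\frac{1}{2}\int_t^s z(1+(u')^2)\,dz}\,ds\,dt,
\]
valid on the whole half-line. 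Because the double integral is manifestly positive, $u>\sigma x$ is immediate and global---no separate comparison or propagation argument is needed. The same identity, read as a fixed-point equation $v=T_\sigma v$ for $v=u-\sigma x$ on a $C^1_0$-space, gives the contraction; the decay in $x$ comes from the outer $1/t^2$ and the Gaussian factor, so there is no delicate weight calibration against the cubic term.

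\textbf{Positivity of $H$.} You propose to get $H>0$ from a Jacobi-type equation and a maximum principle. The paper instead proves a short blow-up lemma: if at any point $\Psi(x_0):=x_0u'(x_0)-u(x_0)\ge 0$ (equivalently $H\le 0$ there), then $\Psi$ satisfies $\Psi'>\Psi/2$, one bounds $u''\ge c(1+(u')^2)$, and the solution ceases to be graphical in finite $x$. Hence any solution defined on $(d,\infty)$ has $\Psi<0$ everywhere, i.e.\ $H>0$, and also $u(x)/x$ is monotone so the asymptotic slope $\sigma$ exists a priori. This replaces both your $H>0$ step and the ``any such end lies in $\mathcal{B}_\sigma$'' step in one stroke.

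In short: both routes set up a contraction for large $x$, but the paper's use of the exact variation-of-parameters identity (with $x$ as a homogeneous solution) and the blow-up lemma for $\Psi$ makes the auxiliary claims ($u>\sigma x$, $H>0$, existence of the slope, the sharp bound $u(0)<\sqrt{2(n-1)}$) fall out with essentially no extra work, whereas in your scheme each requires its own argument. Your proposal is not wrong, but you should be aware that the ``relatively standard consequences'' you list are where most of the paper's specific structure is actually used.
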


\begin{theorem}\label{ThmUni}
Let $\Sigma^n\subseteq\Reals^{n+1}$ be a complete, embedded, self-shrinking hypersurface of revolution. Then $\Sigma^n$ is one of the following:
\begin{itemize}
\item[(1)] $n$-dimensional hyperplane $\Reals^n$ in $\Reals^{n+1}$,
\item[(2)] round cylinder $\Reals\times S^{n-1}$ of radius $\sqrt{2(n-1)}$,
\item[(3)] round sphere $S^n $of radius $\sqrt{2n}$,
\item[(4)] a smooth embedded $S^1\times S^{n-1}$.
\end{itemize}
\end{theorem}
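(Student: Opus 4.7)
The plan is to reduce Theorem~\ref{ThmUni} to a phase-plane analysis of the profile curve. First I would argue that the axis of rotational symmetry of $\Sigma$ must pass through the origin, since any rotation about the axis preserves $\Sigma$ and hence must preserve \eqref{SSEq}, but a rotation about an axis missing $0$ fails to be an orthogonal transformation of $\Reals^{n+1}$ and alters the right-hand side $\langle X,\nu\rangle/2$. Taking the axis to be the $x_1$-axis, $\Sigma$ is obtained by revolving a simple curve $\gamma\subset\{(x,r):r\geq 0\}$. Parameterizing $\gamma$ by arclength and writing $\theta(s)$ for the angle of its unit tangent with the $x_1$-axis, \eqref{SSEq} becomes the autonomous system
\begin{equation*}
x'=\cos\theta,\qquad r'=\sin\theta,\qquad \theta'=\frac{(n-1)\cos\theta}{r}+\tfrac12(x\sin\theta-r\cos\theta),
\end{equation*}
which is invariant under the reflection $(x,\theta)\mapsto(-x,\pi-\theta)$ corresponding to Euclidean reflection through the origin-containing hyperplane $\{x_1=0\}$.

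I would then case-split on the global behavior of $\gamma$. The constant-angle solutions yield $r=\sqrt{2(n-1)}$ when $\theta\equiv 0$ (the cylinder, case 2) and $x=0$ when $\theta\equiv\pi/2$ (the hyperplane, case 1). A compact simple closed loop $\gamma$ disjoint from the axis produces case 4 directly. A compact arc meeting the axis at two points must do so perpendicularly for $\Sigma$ to be smooth; then the reflection symmetry above combined with uniqueness for the ODE with perpendicular boundary value forces $\gamma$ to be a symmetric arc, and identifying the radius using \eqref{SSEq} pins it down to the semicircle $x^2+r^2=2n$, yielding case 3.

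The main obstacle is ruling out non-straight unbounded profile curves $\gamma$, whether disjoint from the axis or meeting it once. Here the classical Alexandrov moving-planes argument fails because translation in $x_1$ is not a symmetry of \eqref{SSEq}. Following the strategy indicated in the abstract, I would substitute \emph{translation and rotation of pieces}: given such a candidate $\gamma$, one compares a sub-arc with a rigid-motion image of a known profile (cylinder, sphere, or plane), with the motion restricted to rotations about $0$ so that the comparison surface remains a self-shrinker, positioned so that first contact is forced by the asymptotic geometry provided by Theorem~\ref{ThmConicalEndsWordyVersion}. At an interior contact point the strong maximum principle for the linearized shrinker operator gives local coincidence; at an axis contact point the perpendicular-boundary ODE uniqueness does the same. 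Global propagation then identifies $\gamma$ with a standard profile, contradicting its unboundedness. I expect the most delicate point to be engineering the initial comparison configuration so that first contact is guaranteed; the conical-end family from Theorem~\ref{ThmConicalEndsWordyVersion} and the cylinder of radius $\sqrt{2(n-1)}$ should serve as the essential barriers in this step.
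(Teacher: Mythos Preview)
Your outline diverges from the paper in two places, and in both the argument as written does not go through.

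First, for the compact arc meeting the axis twice (your sphere case), reflection symmetry plus perpendicular intersection with the axis does \emph{not} by itself pin down a semicircle: there are many reflection-symmetric solutions of the ODE hitting the axis orthogonally (indeed the paper's Lemma~\ref{maximal_graphs}(2) shows every profile leaving through the axis does so orthogonally, regardless of shape). The paper does not argue this way at all; instead it shows that any non-closed embedded profile is globally a graph over the $x$-axis, then uses Lemma~\ref{BlowUpLemma} to conclude $xu'-u<0$ everywhere, i.e.\ $H>0$, and finally invokes Huisken's classification of mean-convex self-shrinkers to get sphere or cylinder. The sphere case is thus handled together with, not separately from, the unbounded case.

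Second, and more seriously, your proposed comparison scheme---rotating a known profile (cylinder, sphere, plane) about the origin and sliding it to first contact---does not fit the problem. A rotation in $\Reals^{n+1}$ about the origin that moves the cylinder or sphere off the $x_1$-axis destroys rotational symmetry, so at the level of profile curves in the half-plane there is no corresponding comparison curve; and rotations \emph{within} the $(x,r)$-plane are not symmetries of the ODE either. What the paper actually does (Proposition~\ref{embedded_characterization}) is translate a sub-arc $\gamma_1$ of $\gamma$ in the $+e_x$ direction until first contact with another sub-arc $\gamma_2$ of the \emph{same} curve. Translation is not a symmetry, but at the first-contact point one computes $\dot\theta_{\gamma_2}-\dot\theta_{\hat\gamma_1}=\tfrac{c}{2}\sin\theta$ with $c>0$, and the sign of $\sin\theta$ is controlled by the careful choice of $\gamma_1,\gamma_2$ (bounded by horizontal points, crossing $r=\sqrt{2(n-1)}$ in a prescribed order), yielding a contradiction. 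The delicate work is not a barrier construction from Theorem~\ref{ThmConicalEndsWordyVersion} but rather the combinatorics of vertical/horizontal points in Lemmas~\ref{higgins} and~\ref{maximal_graphs} that guarantee such a configuration exists whenever $\gamma$ has too many vertical points. Theorem~\ref{ThmConicalEnds} enters only to describe the asymptotic ends when $\gamma$ is unbounded, ensuring the translated first contact still occurs at a finite interior point.
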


\begin{remark}
Note that the list (1)--(3) together with Angenent's torus (in $\Reals^3$, and more general his specific $S^1\times S^{n-1}$-solutions) gives all the presently known examples of complete, embedded self-shrinkers.

In case (4), our assertion is only that $\Sigma$ is generated by a closed, smooth, embedded curve. We conjecture however that geometrically such $\Sigma$ must be symmetric with respect to $x_1\mapsto -x_1$ and in fact coincide with Angenent's torus in \cite{Ang}.
\end{remark}

By combining Theorem \ref{ThmUni} with a result by Anciaux \cite{Anc} we have the following corollary in $3$-space.

\begin{corollary}
Let $\Sigma^2\subseteq\Reals^3$ be a self-shrinker of genus zero which is foliated by circles. Then $\Sigma$ is either: a plane, a round cylinder of radius $\sqrt{2}$, or a round sphere of radius $2$.
\end{corollary}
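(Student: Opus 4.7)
The plan is to reduce the corollary to Theorem \ref{ThmUni} by first establishing rotational symmetry via Anciaux's result in \cite{Anc}. Anciaux's classification of self-shrinking surfaces in $\Reals^3$ foliated by circles should produce precisely the conclusion that the circles lie in parallel planes whose common perpendicular axis meets every leaf at its center, so that $\Sigma^2$ is in fact a surface of revolution (the other a priori possibilities, in which the planes of the circles pivot or the centers move off-axis, get ruled out by the self-shrinker equation \eqref{SSEq}). Thus the only input I would need from \cite{Anc} is the upgrade from \emph{foliated by circles} to \emph{rotationally symmetric}.

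Once $\Sigma^2$ is rotationally symmetric, I would apply Theorem \ref{ThmUni} with $n=2$, which leaves exactly four possibilities: the plane $\Reals^2$, the cylinder $\Reals\times S^1$ of radius $\sqrt{2(n-1)}=\sqrt{2}$, the sphere $S^2$ of radius $\sqrt{2n}=2$, or an embedded $S^1\times S^1$ torus. The last possibility is immediately excluded by the genus zero hypothesis (such a torus has genus one), so $\Sigma^2$ must be one of the three surfaces claimed.

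The only step with any real content is the application of \cite{Anc}; the rest is a case check within Theorem \ref{ThmUni}. The main potential obstacle is that Anciaux's statement might be phrased under slightly different hypotheses (for example, smoothness of the foliation near axis points, or assuming the planes of the circles are a priori parallel), so some care is needed to verify that the setup in \cite{Anc} covers, or can be routinely adapted to, the setting here of an abstract embedded genus zero self-shrinker foliated by circles. Assuming that match-up, the corollary follows directly.
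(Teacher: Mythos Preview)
Your proposal is correct and matches the paper's own approach: the paper does not give a separate proof but simply states that the corollary follows by combining Theorem~\ref{ThmUni} with Anciaux's result in \cite{Anc}, which is exactly your two-step argument (Anciaux upgrades ``foliated by circles'' to ``rotationally symmetric,'' then Theorem~\ref{ThmUni} with $n=2$ plus the genus-zero hypothesis eliminates the torus). Your caveat about checking that Anciaux's hypotheses align with the present setting is reasonable due diligence, but the paper itself does not elaborate on this point.
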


\begin{figure}
\includegraphics[height = 300pt]{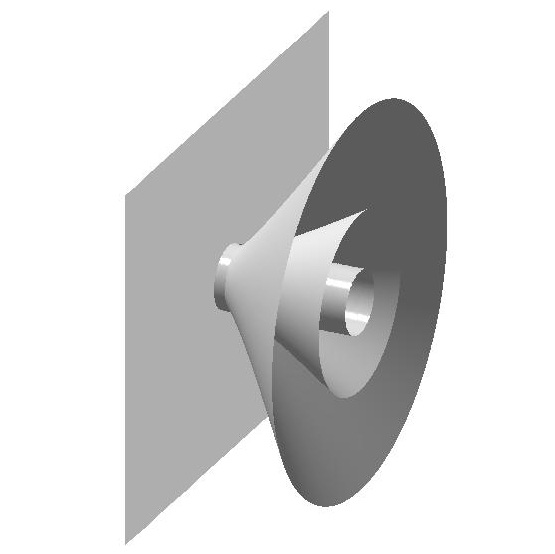}
\caption{Examples of the asymptotically conical self-shrinking ``trumpet'' ends in Theorem \ref{ThmConicalEndsWordyVersion}, interpolating between the flat plane and round cylinder (Matlab).}
\end{figure}

The study of the self-shrinker equation $H = \frac{1}{2} \langle \vec{X}, \vec{\nu} \rangle$ is motivated by the regularity theory for mean curvature flow. In particular, type I singularities are governed by (\ref{SSEq}), as Huisken showed in  \cite{Hu1}. Huisken in \cite{Hu3} classified the possible singularities for the flow of a positive mean curvature initial surface, under the assumption of a bound on $|A|^2$. Currently, very few complete solutions of Equation (\ref{SSEq}) are known, embedded or otherwise, with the sphere, plane, cylinder, and Angenent's Torus (constructed in \cite{Ang}) being the only known examples. There is however numerical evidence for many more. David Chopp in \cite{Ch} (and see \cite{AIC}) numerically computed a large number of interesting (apparently) self-similar solutions, and Angenent in \cite{Ang} gave numerical evidence for immersed  topological spheres,  although none of them have actually been rigorously demonstrated.  The methods in \cite{Ka} of Kapouleas for producing examples of complete embedded minimal surfaces in Euclidean space by desingularization promise to be successful in the context of Equation (\ref{SSEq}); in particular, X. H. Nguyen in \cite{Ng} has had success in providing examples of self-translating (not self-shrinking) surfaces under mean curvature flow.

The numerical evidence cited above suggests that, in general, the singularity profile for mean curvature flow can be quite exotic and wildly behaved, and the classification of solutions to ({\ref{SSEq}) seems impossible in general, however in dimension $2$ the methods of Colding-Minicozzi  in \cite{CM1}--\cite{CM7} offer a possibility. However, for a generic initial surface, one expects to find a rather tame singularity singularity profile, due to the highly unstable nature of  most solutions of (\ref{SSEq}). In fact, this is a long-standing conjecture of Huisken, which was recently answered positively by Colding and Minicozzi in \cite{CM7}.

The study of Equation (\ref{SSEq}) turns out to be a variational problem. Namely, the solutions are actually minimal hypersurfaces in the conformal metric (see \cite{Hu1})
\beq 
g = e^{-\frac{|\vec{X}|^2}{2n}} \sum_{i = 1}^{n+1} dx_i^2
\eeq
on $\Reals^{n+1}$. If $\Sigma_\gamma$ is a hypersurface of revolution determined by a planar curve $\gamma$, then $\Sigma_\gamma$ is minimal in the metric $g$ if and only if the curve $\gamma$ is a geodesic in the upper half-plane with non-complete conformal metric (c.f. \cite{Ang})
\beq \label{ang_metric}
g_\textrm{Ang}=r^{2(n-1)}e^{-\frac{(x^2 + r^2)}{2}}\left\{dx^2 + dr^2 \right\},
\eeq
where $(x,r)$, $r > 0$ are Euclidean coordinates on the upper half-plane. The idea of reducing the  problem of finding closed minimal sub-manifolds to the search for closed geodesics  on a related manifold with a singular metric goes back at least as far as the  paper \cite{HL}, where Hsiang and Lawson produced closed  minimal submanifolds of $S^n$ invariant under a subgroup of the full isometry group. Mean curvature flow restricted to the rotational class is not a new venture either. For example, in addition to \cite{Ang} the paper \cite{AAG95} considered regularity of viscosity solutions for mean curvature flow within the class of rotational surfaces. 

The geodesic equation for curves parametrized by arc length in the upper half plane with metric $g_{\textrm{Ang}}$ as given above in (\ref{ang_metric}) is given by the following system of equations (see \cite{Ang}):
\beq \label{ode_system}
\begin{cases}
&\dot{x} = \cos\theta \\
&\dot {r} = \sin\theta \\
&\dot{\theta} = \frac{x}{2} \sin\theta + \left(\frac{n- 1}{r} -\frac{r}{2}\right)\cos\theta,
\end{cases}
\eeq
where $\theta $ is the angle that $\dot{\gamma}$ makes with the $x$-axis, and where ``$\cdot$'' denotes derivation in the arc length parameter. We will use this notation throughout the article.

Thus, for a hypersurface of revolution generated by a graph $u\in C^2(I)$ over an interval on the $x_1$-axis, $u:I\to\Reals^+$, the self-shrinker equation is
\beq
H(u(x))=\frac{\langle\vec{X},\vec{\nu}\rangle}{2}=\frac{1}{2}\frac{u(x)-xu'(x)}{(1+(u')^2)^{\frac{1}{2}}},
\eeq
which is equivalent to the following ODE
\beq\label{SSODE}
u''(x)=\Big[\frac{xu'(x)-u(x)}{2}+\frac{n-1}{u(x)}\Big]\big(1+(u'(x))^2\big),
\eeq
which is a cubic-derivative quasi-linear elliptic second-order equation of the following form
\[
u'' - xp(x, u'(x))u' + p(x, u'(x)) u = g(u(x), u'(x)),
\]
for appropriately defined functions $p$ and $g$. For the graph of a function $f$ over the $r$-axis, the equation becomes

\begin{equation} \label{r_graph}
 f''(r) = \left\{ \left(\frac{r}{2} - \frac{n-1}{r}\right) f'(r) - \frac{f(r)}{2} \right\} \left(1 + (f'(r))^2 \right).
\end{equation}

For such equations, containing a non-linearity of the form $(u')^3$, the general existence results by Nagumo and others do not apply (see for instance the survey \cite{CH}) and we will be developing an approach from scratch. Furthermore, note that the sign of the terms in (\ref{SSODE}) are such that one does not have the best possible maximum and convexity principles, but instead an oscillating behavior (e.g. as is the case for the linear equations $u''+bu'+cu=0$ when $c > 0$ is positive), contrasting for example the situation one would have had for self-expanders. Much of the intricacy concerning Equation (\ref{SSODE}) stems from this fact, and also from the lack of known exact symmetries.

The reader will notice that, in the proof of Theorem \ref{ThmUni} (e.g. in Proposition \ref{embedded_characterization}), solutions to Equation (\ref{ode_system}) are translated to get contradictions via a maximum principle, as in the method of moving planes. However, here translation is not an isometry for the geometric problem in (\ref{SSEq}), and correspondingly is not an invariance for (\ref{ode_system}). In certain situations, depending on the relative position of solutions and signed direction of translation, it is even ``better'' than an exact symmetry, a key special feature which we exploit repeatedly in our maximum principle arguments.
 
Few known examples of complete embedded hypersurfaces in $\Reals^n$ satisfying the self-shrinker equation, and indeed several non-existence results are known. In the paper \cite{Hu1}, Huisken showed that the only positive mean curvature $H\geq0$ rotationally symmetric hypersurface $\Sigma^n$, defined by revolution of an entire graph over the $x_1$-axis is the cylinder.

However, without the curvature assumption $H\geq0$ it is still an open question as to whether there could exist for example non-standard embedded self-shrinking spheres, planes or cylinders. Note in this connection that Angenent in \cite{Ang} gave numerical evidence for many non-round immersed spheres with a rotational symmetry axis. Our Theorem \ref{ThmUni} answers the question negatively under the assumption of embeddedness as well as a rotational symmetry axis of the hypersurface. Thus there are no analogues of the members of the rotationally symmetric Delaunay unduloid family of embedded, complete, singly periodic constant mean curvature surfaces that in the $H\equiv1$ case interpolates between the round cylinder and a string of round spheres touching at antipodal points (see \cite{De} or \cite{KK}). However, as Theorem \ref{ThmConicalEndsWordyVersion} demonstrates there does exist a family of self-shrinkers (with boundary) interpolating between the flat plane and round cylinder orthogonal to the plane.

Notice that the existence of the ``trumpet'' family of self-shrinkers as in Theorem \ref{ThmConicalEndsWordyVersion} (and its precise version in the below Theorem \ref{ThmConicalEnds}) along with the maximum principle for Equation (\ref{SSEq}) places certain crude a priori restrictions on what the non-compact ends of a general self-shrinker can be. Without investigating such issues further at present, let us remind the reader that this is related to announced work by Tom Ilmanen \cite{Il} stating that self-shrinkers have ends that are (in Hausdorff sense) asymptotically conical.

As mentioned in the introduction, there has been recent interest in applying the desingularizing methods of Kapouleas to the construction of complete embedded self-shrinkers with genus. Apart from its use in the proof of Theorem \ref{ThmUni}, the perspective of such constructions is one of the main interests of Theorem \ref{ThmConicalEndsWordyVersion}.

We note that simultaneously with our work for this paper, the recent monograph Giga-Giga-Saal \cite{GGS10} was also concerned with different proofs of well-known weaker versions of the uniqueness of self-shrinkers given by entire cylindrical graphs, which dates back to Huisken \cite{Hu1} (see also Soner-Souganidis \cite{SS93} and Altschuler-Angenent-Giga \cite{AAG95}), a special case of the present paper. Note also that the below Lemma \ref{BlowUpLemma} alone removes the assumption of $H\geq 0$ from all such results, see Corollary \ref{ThmUisken}.

\section{Proof of Theorem \ref{ThmConicalEndsWordyVersion}: An integral identity for graphs}
The version of Theorem \ref{ThmConicalEndsWordyVersion} we will prove is more precisely stated as follows:

\begin{theorem}[:= Theorem 1']\label{ThmConicalEnds}
Let $n\geq2$. For each fixed ray from the origin,
\[
r_\sigma(x)=\sigma x,\quad r_\sigma:(0,\infty)\to\Reals^+,\quad\sigma>0,
\]
there exists a unique smooth graphical solution $u_\sigma:[0,\infty)\to\Reals^+$, of (\ref{SSODE}), asymptotic to $r_\sigma$.

Also, for $d>0$, any solution $u:(d, \infty) \rightarrow \mathbb{R}^+$ to (\ref{SSODE}) is either the cylinder $u\equiv\sqrt{2(n-1)}$, or is one of the $u_\sigma$ for some $\sigma=\sigma(u)>0$.

Furthermore, the following properties hold for $u_\sigma$ when $\sigma>0$:
\begin{itemize}
\item[(i)] $u_\sigma > r_\sigma$, and $u_\sigma(0)<\sqrt{2(n-1)}$,
\item[(ii)] $|u_\sigma(x)-\sigma x|=O(\tfrac{1}{x})$, and $|u_\sigma'(x)-\sigma|=O(\tfrac{1}{x^2})$ as $x\to+\infty$,
\item[(iii)] $\Sigma_\sigma$ generated by $u_\sigma$ has mean curvature $H(\Sigma_\sigma)>0$,
\item[(iv)]  $u_\sigma$ is strictly convex, and $0<u_\sigma'<\sigma$ holds on $[0, \infty)$,
\item[(v)]  $\gamma_\sigma$, the maximal geodesic containing the graph of $u_\sigma$, is not embedded.
\end{itemize}
\end{theorem}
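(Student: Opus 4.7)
The plan is shooting from infinity for existence, backward continuation via a weighted integral identity to reach $x=0$, and then standard ODE reasoning for properties (i)--(v).

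For existence, write $u(x)=\sigma x+v(x)$ in (\ref{SSODE}). The dominant balance as $x\to\infty$ reads
\[
\tfrac{1+\sigma^2}{2}(xv'-v)+\tfrac{(1+\sigma^2)(n-1)}{\sigma x}\;\approx\;0,
\]
with decaying particular solution $v(x)\sim\tfrac{n-1}{\sigma x}$. I would set this up as a contraction on
\[
X_R=\bigl\{v\in C^1([R,\infty)): \sup_{x\geq R}(x|v(x)|+x^2|v'(x)|)<\infty\bigr\},
\]
rewriting (\ref{SSODE}) as an integral equation with the decay at $+\infty$ prescribed as the boundary condition. For $R$ large (depending on $\sigma$), this yields a unique $u_\sigma$ on $[R,\infty)$ with the rates in (ii), and simultaneously gives uniqueness of the end at infinity for any solution of given slope.

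The solution then extends uniquely backward to a maximal interval $(x_\ast,\infty)$. The crux (and the motivation for the section title) is to show $x_\ast<0$ and $u_\sigma(0)>0$, i.e.\ that $u$ neither blows up nor touches $r=0$ on $[0,R]$. Since self-shrinkers are minimal for the Gaussian-weighted volume functional, I would look for a monotone quantity $\Phi(x)=e^{-x^2/4}\psi(u,u',x)$ along (\ref{SSODE}) whose sign-definite derivative, combined with the asymptotic data, rules out both escape scenarios. Alternatively, the $(r,\theta)$ phase-plane of (\ref{ode_system}) gives a direct geometric version of the same argument: a trajectory coming in with $\theta\to\arctan\sigma$ cannot fall into the axis.

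Granted backward extension past $x=0$, strict convexity (iv) is shown by ruling out inflection points: at $u_\sigma''=0$ the bracket in (\ref{SSODE}) vanishes, and differentiating shows $u_\sigma'''$ has a sign incompatible with $u_\sigma'\nearrow\sigma$. Hence $u_\sigma'<\sigma$ on $[0,\infty)$; $v=u_\sigma-\sigma x$ is decreasing with $v(\infty)=0$, so $u_\sigma>r_\sigma$; and evaluating (\ref{SSODE}) at $x=0$ with $u_\sigma''(0)>0$ forces $u_\sigma(0)^2<2(n-1)$, completing (i). Property (iii), $H>0\Leftrightarrow u_\sigma>xu_\sigma'$, follows from $u_\sigma(0)>0$, $u_\sigma'<\sigma$, and (i). For the classification on $(d,\infty)$, a solution either stays bounded (and is forced by the ODE and the monotone $\Phi$ to be the cylinder $\sqrt{2(n-1)}$) or grows, in which case $\lim u(x)/x$ exists and equals some $\sigma$, and the contraction identifies $u$ with $u_\sigma$. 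Finally, for (v), extending $\gamma_\sigma$ past the graph in the Angenent metric and tracking $\theta$ via (\ref{ode_system}), the oscillatory terms noted in the introduction should force a self-intersection in finite arc length. The hardest step throughout is identifying the correct weighted integral identity and using it both for backward continuation to $x=0$ and for excluding oscillatory departures from the cylinder.
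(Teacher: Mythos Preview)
Your overall architecture---contraction at infinity for existence and uniqueness of the end, then an integral identity to continue back to $x=0$---matches the paper's, and your asymptotic rates in (ii) are correct. But two steps in your sketch have genuine gaps.

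\textbf{Monotonicity and convexity in (iv).} Your inflection-point argument does not close. Differentiating (\ref{SSODE}) at a point with $u_\sigma''=0$ gives
\[
u_\sigma'''=-\frac{(n-1)\,u_\sigma'}{u_\sigma^2}\bigl(1+(u_\sigma')^2\bigr),
\]
so at the \emph{largest} inflection point $x_1$ (where $u_\sigma''\geq 0$ to the right) one only concludes $u_\sigma'(x_1)\leq 0$. That is perfectly compatible with $u_\sigma'\to\sigma$ from below on $(x_1,\infty)$, so you have not excluded $u_\sigma'\leq 0$ somewhere on $[0,\infty)$. In the paper the logic runs the other way: monotonicity $u_\sigma'>0$ is established first, and convexity is then read off from the twice-differentiated integral identity (since $u_\sigma(s)>u_\sigma(x)$ for $s>x$). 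The proof of $u_\sigma'>0$ is not local ODE reasoning: one linearizes (\ref{r_graph}) at the plane $f\equiv 0$, shows the linearized solution $g$ necessarily changes sign on $(0,\infty)$, deduces that for large $\sigma$ the graph crosses into $\{x<0\}$ (hence is increasing over $[0,\infty)$), and then pushes this down to all $\sigma>0$ by a continuity argument in $\sigma$ together with Huisken's cylinder rigidity. Your sketch does not contain this mechanism.

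\textbf{Non-embeddedness (v).} ``Tracking $\theta$ and the oscillatory terms should force a self-intersection'' is not an argument; in fact there are embedded closed geodesics (the doughnuts), so oscillation alone cannot decide. In the paper, (v) is obtained only after the full classification in Section~\ref{classification_section}: one shows any complete embedded geodesic with an interior vertical point fails a maximum-principle comparison obtained by \emph{translating} one arc against another (a moving-planes substitute, since translation is not an isometry of $g_{\mathrm{Ang}}$). Since the maximal extension of $\gamma_\sigma$ must acquire interior vertical points, embeddedness is ruled out. You will need this translation/comparison step, or something of equivalent strength, to get (v).

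Two smaller remarks. For the classification on $(d,\infty)$ you should first use Lemma~\ref{BlowUpLemma} to get $xu'-u<0$ pointwise, which makes $u(x)/x$ monotone and produces the limit $\sigma\geq 0$ directly; the ``bounded vs.\ unbounded'' dichotomy then falls out of the integral identity rather than from a separate monotone $\Phi$. And the backward continuation to $x=0$ in the paper is exactly the sup-estimates of Lemma~\ref{SupEstimates}, which come straight from the integral identity once you know $u>\sigma x$; no additional Lyapunov-type quantity is needed.
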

This immediately gives the following corollary, where as an improvement over \cite{Hu1} (where $H\geq 0$ was required) we do not need any curvature assumption.

\begin{corollary}\label{ThmUisken}
Let $\Sigma^n$ be a smooth self-shrinking hypersurface of revolution, which is generated by rotating an entire graph around the $x_1$-axis. Then $\Sigma^n$ is the round cylinder $\Reals\times S^{n-1}$ of radius $\sqrt{2(n-1)}$ in $\Reals^{n+1}$.
\end{corollary}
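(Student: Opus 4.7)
The plan is to use the end-classification supplied by Theorem \ref{ThmConicalEnds} and then rule out the non-cylindrical alternative with the non-embeddedness property (v).

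Let $u : \mathbb{R} \to \mathbb{R}^+$ be the $C^2$ generator of $\Sigma^n$, solving (\ref{SSODE}) on all of $\mathbb{R}$. First, I would pick any $d > 0$ and apply the second assertion of Theorem \ref{ThmConicalEnds} to the restriction $u|_{(d, \infty)}$, concluding that either $u \equiv \sqrt{2(n-1)}$ on $(d, \infty)$, or $u$ coincides on $(d, \infty)$ with $u_\sigma$ for some $\sigma > 0$. In the first case, standard ODE uniqueness for (\ref{SSODE}) (which is regular wherever $u > 0$), or equivalently real analyticity of self-shrinkers, propagates the identity $u \equiv \sqrt{2(n-1)}$ to all of $\mathbb{R}$, so $\Sigma^n$ is the round cylinder of radius $\sqrt{2(n-1)}$ and we are done.

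In the remaining case I would use property (v) to derive a contradiction from the very existence of an entire graph. By ODE uniqueness, $u = u_\sigma$ on the full common domain $[0, \infty)$, and the assumption that $u$ is an entire graph means $u_\sigma$ extends smoothly across $x = 0$ to a positive solution on all of $\mathbb{R}$. Consider the planar curve $\Gamma_u = \{(x, u(x)) : x \in \mathbb{R}\}$ in the Angenent upper half-plane. Two features drive the argument: $\Gamma_u$ is embedded because it is the graph of a function over $\mathbb{R}$, and, since the arc-length integrand satisfies $\sqrt{1 + (u'(x))^2} \geq 1$, its arc-length parameter already exhausts all of $\mathbb{R}$. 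Hence $\Gamma_u$ is a geodesic whose arc-length domain is maximal, so the maximal geodesic $\gamma_\sigma$ containing the graph of $u_\sigma$ cannot extend it any further and must equal $\Gamma_u$ as a set. But then $\gamma_\sigma$ is embedded, directly contradicting property (v).

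Thus only the cylinder case survives, proving the corollary. The heavy conceptual input is property (v) itself, which is supplied by Theorem \ref{ThmConicalEnds}; given that, the one step I would scrutinize most carefully is the arc-length comparison in the second case, namely verifying that an entire graphical solution really cannot be a proper sub-arc of a strictly longer maximal geodesic, so that the non-embeddedness of $\gamma_\sigma$ has no ``room'' to occur in a region disjoint from $\Gamma_u$.
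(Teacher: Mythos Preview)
Your proof is correct and takes essentially the same approach as the paper's one-sentence argument: restrict to the right half-axis, invoke the dichotomy in Theorem~\ref{ThmConicalEnds}, and use property~(v) to rule out the $u_\sigma$ alternative. You have usefully spelled out the point the paper leaves implicit, namely that an entire graph has infinite Euclidean arc-length in both directions and hence already \emph{is} the maximal geodesic $\gamma_\sigma$, so the non-embeddedness promised by (v) must occur on $\Gamma_u$ itself.
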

\begin{proof}[Proof of Corollary]
Any entire graph is a graph over the right half axis. Theorem \ref{ThmConicalEnds} characterizes all such graphs, and in particular says that none are embedded excepting the cylinder solution. 
\end{proof}

Note that our Theorem \ref{ThmConicalEnds} amounts to the following interesting geometric fact, which we get since instead of $(n-1)$ we may take an arbitrary number $\alpha>0$ everywhere in our proofs.
\begin{corollary}
In the (non-complete) generalized Gaussian upper half-plane
\[
\mathbb{G}_\alpha=(\Reals_x\times\Reals_r^+,g_{ij}=r^{2\alpha} e^{-\frac{x^2+r^2}{4}}\delta_{ij}),
\]
for any $\alpha\geq 0$, there exists for each $\sigma>0$ a unique geodesic ray $u_\sigma$, with the properties in Theorem \ref{ThmConicalEnds}. Note that for the usual Gaussian metric (where $\alpha=0$), we have $u_\sigma\equiv r_\sigma$, i.e. straight lines through the origin.
\end{corollary}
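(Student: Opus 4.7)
My plan is to reduce everything to Theorem~\ref{ThmConicalEnds} via an affine rescaling, together with the observation that its proof never uses that $n-1$ is an integer.

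First, I would derive the geodesic ODE in $\mathbb{G}_\alpha$. Writing $g_\alpha = e^{2\phi}\delta_{ij}$ with $\phi(x,r) = \alpha \log r - (x^2+r^2)/8$, and using the standard geodesic-curvature formula $\kappa_g = e^{-\phi}(\dot\theta + \sin\theta\,\phi_x - \cos\theta\,\phi_r)$ for a Euclidean arc-length curve with tangent angle $\theta$ in a conformally flat 2D metric, the geodesic condition $\kappa_g = 0$ becomes
\[
\dot x = \cos\theta, \qquad \dot r = \sin\theta, \qquad \dot\theta = \tfrac{x}{4}\sin\theta + \bigl(\tfrac{\alpha}{r} - \tfrac{r}{4}\bigr)\cos\theta.
\]
Next, under the linear rescaling $(X,R) = (x/\sqrt 2,\,r/\sqrt 2)$ one checks that $g_\alpha$ pulls back to a constant multiple of $R^{2\alpha}e^{-(X^2+R^2)/2}(dX^2+dR^2)$, i.e.\ precisely (\ref{ang_metric}) with $n-1$ replaced by $\alpha$. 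Since a constant overall conformal factor does not affect the geodesic foliation, and rays through the origin are preserved by this rescaling (slopes are invariant), the problem reduces exactly to the setting of Theorem~\ref{ThmConicalEnds} with $n-1$ replaced by the parameter $\alpha$.

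For $\alpha > 0$, the proof of Theorem~\ref{ThmConicalEnds} then applies essentially verbatim: inspection shows that its a priori bounds, sliding/maximum-principle arguments, integral identity, and asymptotic expansion rely on $n-1$ only through its value as a fixed positive real constant --- never through being integer-valued or exceeding $1$. Running the entire argument with $\alpha$ in place of $n-1$ produces, for each $\sigma > 0$, the unique geodesic ray $u_\sigma$ asymptotic to $r_\sigma$ and enjoying properties (i)--(v). For the degenerate case $\alpha = 0$, the $r^{2\alpha}$ weight disappears and the family of lines $r = \sigma x$ through the origin are geodesics: on such a line $\theta \equiv \theta_0 = \arctan \sigma$ is constant so $\dot\theta \equiv 0$, while the right-hand side of the ODE reduces to $(x\sin\theta_0 - r\cos\theta_0)/4 \equiv 0$. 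Standard ODE uniqueness for the system (smooth away from $r = 0$) then rules out any other ray-asymptotic geodesic and yields $u_\sigma \equiv r_\sigma$, as required.

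The main pitfall is purely bookkeeping: one must confirm that no estimate in the proof of Theorem~\ref{ThmConicalEnds} silently degrades as $\alpha \to 0^+$, and that no sign condition in the ODE secretly requires $\alpha$ to be a positive integer. Since the ODE and its coefficients depend smoothly on $\alpha$ as a real parameter and all arguments are planar, this reduces to a careful re-reading of the earlier proof rather than new analytic work.
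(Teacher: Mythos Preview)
Your approach is correct and is essentially what the paper does: the paper gives no separate proof of this corollary, merely the remark preceding it that ``instead of $(n-1)$ we may take an arbitrary number $\alpha>0$ everywhere in our proofs.'' Your explicit derivation of the geodesic ODE and the rescaling $(X,R)=(x/\sqrt{2},r/\sqrt{2})$ to absorb the $e^{-|\cdot|^2/4}$ versus $e^{-|\cdot|^2/2}$ discrepancy are useful additions that the paper leaves to the reader.

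One small correction: in the $\alpha=0$ case, ``standard ODE uniqueness'' for the system is uniqueness given \emph{initial} data, not asymptotic data, so it does not by itself rule out a second ray-asymptotic geodesic. The cleanest fix is to note that for $\alpha=0$ the integral identity of Lemma~\ref{MagicLemma} collapses to $u(x)=\sigma x$ (the double integral carries the prefactor $2(n-1)\leadsto 2\alpha=0$), which forces $u_\sigma\equiv r_\sigma$ directly; alternatively, the contraction-mapping uniqueness argument still applies with $\alpha=0$.
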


We will need the following lemma, which observes sufficient conditions for solutions to become non-graphical.

\begin{lemma}\label{BlowUpLemma}
If $x_0\in(0,\infty)$, and $(x_0,x_\infty)$ is a maximally extended solution to the initial value problem
\beq\label{SSODESys}
\begin{cases}
&u''=\Big[\frac{x}{2}u'+\frac{n-1}{u}-\frac{u}{2}\Big]\big(1+(u')^2\big),\\
&u(x_0)=\sigma x_0,\\
&u'(x_0)\geq\sigma,
\end{cases}
\eeq
where $\sigma>0$. Then $x_\infty<\infty$, and if $u(x_0)\geq\sqrt{2(n - 1)}$, then $x_\infty\leq(1+\frac{1}{n-1})x_0$. Geometrically these initial conditions mean that $H(u(x_0))\leq0$ at the point on the hypersurface $\Sigma$.
\end{lemma}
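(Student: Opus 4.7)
The plan is to monitor the quantity $h(x):=xu'(x)-u(x)$, which equals $-2H\sqrt{1+(u')^2}$ along the graph, so the hypothesis $u'(x_0)\geq\sigma$ is precisely $h(x_0)\geq 0$ (the stated $H\leq 0$). The first step is to observe that the ODE in (\ref{SSODESys}) yields $u''(x)=\big[h(x)/2+(n-1)/u(x)\big](1+(u'(x))^2)>0$ whenever $h(x)\geq 0$, because the $(n-1)/u$ term alone is strictly positive; hence $h'(x)=xu''(x)>0$, so the inequality $h\geq 0$ is self-propagating, with $h(x)>0$ strictly for $x>x_0$. Consequently $u$ is strictly convex and $u'(x)\geq\sigma$ on the entire existence interval.

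The central tool is the identity
\[
\frac{d}{dx}\arctan u'(x) \;=\; \frac{u''(x)}{1+(u'(x))^2} \;=\; \frac{h(x)}{2}+\frac{n-1}{u(x)} \;\geq\; 0.
\]
Since $\arctan u'<\pi/2$, integrating this over $(x_0,x_\infty)$ produces
\[
\int_{x_0}^{x_\infty}\!\Big[\frac{h(x)}{2}+\frac{n-1}{u(x)}\Big]\,dx \;\leq\; \frac{\pi}{2}-\arctan u'(x_0) \;=\; \arctan\frac{1}{u'(x_0)} \;\leq\; \frac{1}{u'(x_0)} \;\leq\; \frac{1}{\sigma}.
\]
The first assertion ($x_\infty<\infty$) is immediate: for any small $\varepsilon>0$, strict monotonicity gives $h(x)\geq h(x_0+\varepsilon)>0$ for all $x\geq x_0+\varepsilon$, so an infinite interval would make the left-hand side infinite.

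The sharp constant $1/(n-1)$ in the quantitative bound is the step I expect to require the most care. The plan is first to upgrade $h\geq 0$ to $h(x)\geq (n-1)(u(x)-u_0)$, where $u_0:=u(x_0)$: the ODE combined with $1+(u')^2\geq(u')^2$ and $u\leq xu'$ (which is just $h\geq 0$) gives $h'(x)=xu''(x)\geq(n-1)u'(x)$, and integrating from $x_0$ yields the claim. Next, under the hypothesis $u_0\geq\sqrt{2(n-1)}\geq\sqrt{2}$, on $[u_0,\infty)$ one has the elementary inequality $(u-u_0)/2+1/u\geq 1/u_0$, since after clearing denominators it reduces to $uu_0\geq 2$, which follows from $u_0^2\geq 2(n-1)\geq 2$. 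Substituting both inputs into the integrand gives the pointwise lower bound $h/2+(n-1)/u\geq(n-1)/u_0$; combining with the displayed integral bound and using $u_0=\sigma x_0$ to write $1/\sigma=x_0/u_0$, one obtains $(n-1)(x_\infty-x_0)/u_0\leq x_0/u_0$, i.e.\ $x_\infty-x_0\leq x_0/(n-1)$, as desired.
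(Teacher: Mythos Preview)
Your proof is correct and follows the same architecture as the paper's: both track $\Psi=h=xu'-u\geq 0$, establish the pointwise lower bound $h/2+(n-1)/u\geq(n-1)/u_0$ on $(\arctan u')'$, and integrate against the a priori bound $\arctan u'<\pi/2$. The only substantive difference lies in how the pointwise lower bound is obtained: the paper shows directly that $\Phi:=h/2+(n-1)/u$ has $\Phi'>0$ by differentiating and propagating a coupled system of inequalities, whereas you pass through the intermediate estimate $h'\geq(n-1)u'$ (hence $h\geq(n-1)(u-u_0)$) and the elementary inequality $(u-u_0)/2+1/u\geq 1/u_0$, which is a slightly cleaner route to the same conclusion.
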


\begin{proof}[Proof of Lemma \ref{BlowUpLemma}]
Defining $\Psi(x):=xu'(x)-u(x)$, we note that the initial conditions are equivalent to $\Psi(x_0)\geq 0$. Since
\begin{equation} \label{SolComparison}
\Psi'=\Big(\frac{\Psi}{2}+\frac{n-1}{u}\Big)(1+(u')^2)>\frac{\Psi}{2}\geq0,
\end{equation}
we see that $\Psi'>0$ and $\Psi\geq0$, so $u'(x)\geq u(x)/x>0$, for $x\geq x_0$. Thus in particular there always exists $x_0'\geq x_0$ such that $u(x_0')\geq\sqrt{2(n - 1)}$ and $\Psi(x_0')\geq 0$, and we can without loss of generality assume $u(x_0)\geq\sqrt{2(n - 1)}$.

If we define for $u$ the quantity
\[
\Phi(x):=\frac{x}{2}u'+\frac{n-1}{u}-\frac{u}{2},
\]
then by (\ref{SSODESys}) we have $\Phi(x_0)\geq\frac{n-1}{\sigma x_0}$. We claim that in fact $\Phi(x)\geq\frac{n-1}{\sigma x_0}$ for all $x\geq x_0$. Namely assuming this holds up to $x$ we have for $x\geq x_0$,
\begin{align*}
\frac{d}{dx}\Big(\frac{x}{2}u'+\frac{n-1}{u}-\frac{u}{2}\Big)&=\frac{x}{2}u''-(n-1)\frac{u'}{u^2}=\frac{x}{2}\Big(1+(u')^2\Big)\Phi-(n-1)\frac{u'}{u^2}\\
&\geq \frac{x}{2}\frac{n-1}{\sigma x_0}\Big(1+(u')^2\Big)-(n-1)\frac{u'}{u^2}\geq \frac{n-1}{2\sigma}>0,
\end{align*}
assuming that both $u(x)\geq\sqrt{2(n - 1)}$ and $u'(x)\geq\sigma$. In particular $u''\geq 0$, and hence the set of conditions
\beq
\begin{cases}
&\Phi(x)\geq\frac{n-1}{\sigma x_0},\\
&u'(x)\geq\sigma,\\
&u(x)\geq\sqrt{2(n - 1)},
\end{cases}
\eeq
are simultaneously preserved by the self-shrinker ODE in (\ref{SSODE}) as $x\geq x_0$ increases.

Using $\Phi(x)\geq\frac{n-1}{\sigma x_0}$, we get that $u''\geq\frac{n-1}{\sigma x_0}(1+(u')^2)$, for $x\geq x_0$, and integrating this inequality gives
\[
u'(x)\geq\tan\Big[(n-1)\frac{x-x_0}{\sigma x_0}+\arctan\sigma\Big],
\]
which finally leads to $x_\infty<\frac{\sigma x_0}{n-1}\Big(\tfrac{\pi}{2}-\arctan\sigma\Big)\leq\frac{n}{n-1}x_0 + x_0 <\infty$.
\end{proof}

\begin{remark}\label{OtherUiskenProof}
Incidentally Lemma \ref{BlowUpLemma} also removes the $H\geq 0$ assumption, yielding a different proof of Corollary \ref{ThmUisken}.
\end{remark}

\begin{lemma}[Integral identity]\label{MagicLemma}
Any solution $u: (d, \infty) \rightarrow \mathbb{R}^+$ to (\ref{SSODE}), where $d\geq0$, satisfies for some $\sigma=\sigma(u)\geq0$ the identity 
\beq\label{MagicFormulaLong}
u(x)=2(n-1) x\int_x^\infty \frac{1}{t^2} \bigg\{\int_t^\infty \frac{s}{2}\frac{1 + (u'(s))^2}{u(s)} e^{- \frac{1}{2} \int_t^s  z\left( 1 + (u'(z))^2 \right) dz}ds \bigg\}dt+\sigma x,
\eeq
when $x\in (d,\infty)$.
\end{lemma}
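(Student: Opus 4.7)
The plan is to linearize in the quantity $\Psi(x):=xu'(x)-u(x)$. Direct differentiation together with (\ref{SSODE}) yields the inhomogeneous linear first-order ODE
\beq\label{PsiODEplan}
\Psi'(x) - \frac{x(1+(u'(x))^2)}{2}\Psi(x) = \frac{(n-1)\,x\,(1+(u'(x))^2)}{u(x)}.
\eeq
First I would observe that $\Psi<0$ throughout $(d,\infty)$: at any $x_0>d$, setting $\sigma:=u(x_0)/x_0>0$, the assumption $\Psi(x_0)\geq 0$ is exactly the hypothesis $u'(x_0)\geq\sigma$ of Lemma \ref{BlowUpLemma}, which would force finite-time blow-up and contradict the global existence of $u$ on $(d,\infty)$. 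In particular $v(t):=u(t)/t$ satisfies $v'(t)=\Psi(t)/t^2<0$, so $v$ is strictly decreasing and bounded below by $0$; the finite limit $\sigma:=\lim_{t\to\infty}v(t)\geq 0$ exists.

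For a fixed base point $t\in(d,\infty)$, introduce the integrating factor $\mu(s):=\exp\bigl(-\frac{1}{2}\int_t^s z(1+(u'(z))^2)\,dz\bigr)$, so that $\mu(t)=1$ and (\ref{PsiODEplan}) becomes
\beq
(\mu\Psi)'(s) = \mu(s)\cdot\frac{(n-1)\,s\,(1+(u'(s))^2)}{u(s)} > 0.
\eeq
Hence $\mu\Psi$ is strictly increasing, and since $\Psi<0$ and $\mu>0$, there exists $L\leq 0$ with $\mu(T)\Psi(T)\nearrow L$ as $T\to\infty$.

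The main obstacle is to prove that $L=0$. I would argue by contradiction: if $L=-c<0$, then $\Psi(T)\leq -c/(2\mu(T))$ for all sufficiently large $T$. Using the crude bound $\mu(T)\leq e^{-(T^2-t^2)/4}$ (valid since $1+(u'(z))^2\geq 1$), the quantity $\Psi(T)/T^2$ then diverges to $-\infty$ at super-Gaussian rate. Integrating $v'=\Psi/s^2$ on $[t,R]$ with this lower bound on $|\Psi|$ would drive $v(R)\to -\infty$ as $R\to\infty$, contradicting $v>0$. Hence $L=0$.

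Once $L=0$ is in hand, passing to the limit $T\to\infty$ in the integrated equation yields
\beq
-\Psi(t) = 2(n-1)\int_t^\infty \frac{s}{2}\,\mu(s)\,\frac{1+(u'(s))^2}{u(s)}\,ds.
\eeq
Integrating $v'(t)=\Psi(t)/t^2$ from $x$ to $\infty$ using $v(t)\to\sigma$ gives $u(x)/x=\sigma-\int_x^\infty\Psi(t)/t^2\,dt$; substituting the above expression for $\Psi(t)$ and multiplying through by $x$ produces exactly the identity (\ref{MagicFormulaLong}).
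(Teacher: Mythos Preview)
Your argument is correct, and it reaches the identity by a route that is genuinely different from the paper's, though the two are closely related. The paper treats (\ref{SSODE}) as a second-order linear inhomogeneous ODE with frozen coefficients, observes that $u_1(x)=x$ is a homogeneous solution, constructs a second solution $u_2$, and applies variation of parameters on a finite interval $(d,a)$. The resulting formula (equation (\ref{GeneralSol}) in the paper) has a boundary term $(u(a)-au'(a))\cdot[\ldots]$, and the delicate part of the paper's proof is to produce a special sequence $a_k\to\infty$ with $u(a_k)\geq\sqrt{2(n-1)}$ and $u'(a_k)\geq 0$, so that $|u(a_k)-a_ku'(a_k)|$ stays bounded while the accompanying factor vanishes. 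You instead reduce immediately to the first-order equation for $\Psi=xu'-u$ (in effect performing one integration of the second-order equation using the known solution $u_1=x$), and the boundary term you must kill is $\mu(T)\Psi(T)$, which is the same quantity the paper controls. Your contradiction argument for $L=0$ --- that $\mu(T)\leq e^{-(T^2-t^2)/4}$ forces $|\Psi|$ to grow super-Gaussianly, making $v(R)=u(R)/R\to-\infty$ --- is cleaner and avoids the sequence-selection step entirely. One minor sharpening: since $\mu\Psi$ is increasing to $L$, you in fact have $\mu(s)\Psi(s)\leq L$ for \emph{all} $s\geq t$, not just large $s$; this makes the contradiction immediate without the factor of $2$. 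The paper's approach does buy something extra: the finite-$a$ formula (\ref{GeneralSol}) is reused later for the approximating solutions $u_{\sigma,a}$, whereas your argument is tailored to the half-line.
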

\begin{proof}
Suppose first that we are given a solution $u: (d, a) \rightarrow \infty$ over an interval $(d, a)$. We can regard the solution $u$ as solving an inhomogeneous linear equation determined by freezing the coefficients at $u$,
\beq\label{FrozenODE}
u''-\big(1+(\varphi')^2\big)\frac{x}{2}u'+\big(1+(\varphi')^2\big)\frac{u}{2}=(n-1)\frac{\big(1+(\varphi')^2\big)}{\varphi},
\eeq
where we have set $u = \varphi$. We can solve the resulting linear equation with variable coefficients, for $x\in (d,a)$, by making the observation that a pair of spanning solutions of the homogeneous linear equation are
\beq
u_1(x)=x,\quad\textrm{and}\quad u_2(x)=x\int_x^a \frac{e^{-\frac{1}{2}\int_s^a z (1 + (\varphi')^2) dz}}{s^2}ds,
\eeq
Then computing the Wronskian $W(s)=e^{- \frac{1}{2} \int_s^a z (1 + (\varphi'(z))^2)dz}$, and matching the initial conditions gives
\begin{eqnarray} \notag
u (x) & = &\frac{u(a)}{a}x+(u(a)-u'(a)a)x\int_x^a \frac{e^{-\frac{1}{2}\int_s^a z (1 + (u')^2) dz}}{s^2}ds\\ \label{GeneralSol} 
& + & (n-1)x\int_x^a \frac{1}{t^2} \bigg\{\int_t^a \frac{ s \left( 1 + (u'(s))^2 \right)}{u(s)} e^{- \frac{1}{2} \int_t^s  z\left( 1 + (u'(z))^2 \right) dz}ds \bigg\}dt.
\end{eqnarray} 

To complete the proof of (\ref{MagicFormulaLong}), we will show that for some limit $\sigma\geq0$,
\beq
\frac{u(a)}{a} \rightarrow \sigma, \quad\textrm{for}\quad a\to\infty
\eeq
\beq
(u(a)-u'(a)a)x\int_x^a \frac{e^{-\frac{1}{2}\int_s^a z (1 + u'^2) dz}}{s^2}ds \rightarrow 0, \quad\textrm{for}\quad a\to\infty.
\eeq

Recall that by Lemma \ref{BlowUpLemma}, for any solution $u: (d,\infty) \rightarrow \mathbb{R}^+$ the quantity $\Psi(x) =  xu'(x) - u(x)$ is pointwise negative. Thus the ratio $\frac{u(a)}{a}$ is monotonically decreasing in $a$, and hence converges to some limit $\sigma \geq 0$. The negativity of $\Psi$ also implies that 
\beq\label{LowUBound}
u(x) \geq (n-1)x\int_x^a \frac{1}{t^2} \bigg\{\int_t^a \frac{ s \left( 1 + (u'(s))^2 \right)}{u(s)} e^{- \frac{1}{2} \int_t^s  z\left( 1 + (u'(z))^2 \right) dz}ds \bigg\}dt  + \sigma x.
\eeq

By this it follows that there exists a sequence $\{a_k\}$ increasing to infinity such that $u(a_k) \geq \sqrt{2(n-1)}$. Namely, otherwise one would have that $u(x) < \sqrt{2(n-1)}$ for large enough $x$. With (\ref{LowUBound}) we get for such large $x$ that
\[
u(x)\geq \frac{2(n-1)}{\sqrt{2(n-1)}} - R(a)\to \sqrt{2(n-1)},\quad\textrm{for}\quad a\to\infty,
\]
where $R(a)$ is an explicit error term, yielding the contradiction $u(x)\geq\sqrt{2(n-1)}$.

Moreover, we can modify the sequence $\{a_k\}$ to satisfy in addition $u'(a_k) \geq 0$. This is easily seen to follow from Equation (\ref{SSODE}) and the mean value theorem, using that $u(a_k)\geq\sqrt{2(n-1)}$ on the original sequence. Thus we have
\[
0 < u(a_k) - u'(a_k)a_k < \sqrt{2(n-1)},
\]
so that this term is bounded, and since
\[
\int_x^{a_k} \frac{e^{-\frac{1}{2} \int_s^{a_k} z(1 + (u
'(z))^2) dz}}{s^2} ds \leq \frac{e^{-\frac{a_k^2}{4}}}{x^2}\int_x^{a_k}e^{\frac{s^2}{4}}ds\to 0,\quad\textrm{for}\quad a_k\to\infty.
\]
we see that inserting the sequence $a_k \rightarrow \infty$ in (\ref{GeneralSol}) leads to (\ref{MagicFormulaLong}).
\end{proof}

As an immediate consequence of Lemma \ref{MagicLemma} we see that $u_\sigma(x) > \sigma x$, i.e. $u_\sigma>r_\sigma$, which leads to the following $L^\infty$-estimates.

\begin{lemma}\label{SupEstimates}
Let $u:(d,\infty)\to\Reals^+$ be as in Lemma \ref{MagicLemma}, with $\sigma>0$. Then
\begin{align}
&\sup_{s\in(x,\infty)}|u(s)-\sigma s|\leq \frac{2(n-1)}{\sigma x},\label{FuncBound}\\
&\sup_{s\in(x,\infty)}|u'(s)-\sigma|\leq\frac{2(n-1)}{\sigma x^2}\label{GradBound},
\end{align}
for $x\in(d,\infty)$. In particular $u$ extends to $u_\sigma:(0,\infty)\to\Reals^+$.
\end{lemma}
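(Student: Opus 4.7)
The plan is to squeeze both estimates directly out of the integral identity (\ref{MagicFormulaLong}) by exploiting a perfect-derivative structure in the inner integrand. Writing the identity as $u(x)-\sigma x=2(n-1)x\int_x^\infty G(t)/t^2\,dt$, where
\[
G(t):=\int_t^\infty \frac{s}{2}\frac{1+(u'(s))^2}{u(s)}\,e^{-\frac{1}{2}\int_t^s z(1+(u'(z))^2)\,dz}\,ds,
\]
the key observation is that $\frac{s}{2}(1+(u'(s))^2)$ equals, up to sign, the $s$-derivative of the exponential factor, so the companion integral collapses to
\[
\int_t^\infty \frac{s}{2}(1+(u'(s))^2)\,e^{-\frac{1}{2}\int_t^s z(1+(u'(z))^2)\,dz}\,ds=1.
\]
Combined with the monotonicity of $u(s)/s$ established in the proof of Lemma \ref{MagicLemma} (recall $\Psi=xu'-u<0$ gives $u(s)\geq\sigma s\geq\sigma t$ for $s\geq t$), this yields $G(t)\leq 1/(\sigma t)$, hence
\[
0<u(x)-\sigma x\leq 2(n-1)x\int_x^\infty\frac{dt}{\sigma t^3}=\frac{n-1}{\sigma x}.
\]
Since this bound decreases in $x$, taking the supremum over $s\in(x,\infty)$ gives (\ref{FuncBound}), in fact with constant $n-1$ in place of $2(n-1)$.

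For (\ref{GradBound}) I would differentiate the identity in $x$. A direct Leibniz computation, using $u(x)-\sigma x=2(n-1)x\int_x^\infty G(t)/t^2\,dt$, gives
\[
u'(x)-\sigma=\frac{u(x)-\sigma x}{x}-\frac{2(n-1)G(x)}{x}.
\]
By (\ref{FuncBound}) and $G(x)\geq 0$ the first term lies in $[0,\,(n-1)/(\sigma x^2)]$, while by $G(x)\leq 1/(\sigma x)$ the second lies in $[-2(n-1)/(\sigma x^2),\,0]$; summing the ranges delivers $|u'(x)-\sigma|\leq 2(n-1)/(\sigma x^2)$, and the same monotonicity as before upgrades this to the supremum bound.

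Finally, for the extension of $u$ from $(d,\infty)$ to $(0,\infty)$, I would appeal to classical ODE continuation: on any compact $[a,b]\subset(0,\infty)$ the just-derived estimates bound $u$ and $u'$ uniformly, while $u\geq\sigma x\geq\sigma a>0$ keeps the $1/u$-singularity of (\ref{SSODE}) safely away, so the right-hand side of (\ref{SSODE}) is uniformly bounded and the solution extends through every $x>0$. Since $\sigma$ is determined by the behavior at infinity, the integral identity persists under the extension and the estimates propagate with it. I do not expect a serious obstacle; the main insight---reading the inner Gaussian-weighted integral as a perfect derivative equal to $1$---is essentially built into the form of (\ref{MagicFormulaLong}), and everything else is bookkeeping.
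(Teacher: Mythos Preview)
Your proof is correct and follows essentially the same route as the paper: both use the integral identity (\ref{MagicFormulaLong}), the lower bound $u>r_\sigma$ coming from $\Psi<0$, and the observation that the inner integrand is a perfect derivative integrating to $1$. The only (minor) difference is that you bound $1/u(s)$ by $1/(\sigma t)$ rather than the paper's cruder $1/(\sigma x)$, which is why you obtain the constant $n-1$ instead of $2(n-1)$ in (\ref{FuncBound}); your explicit derivative formula $u'-\sigma=(u-\sigma x)/x-2(n-1)G(x)/x$ and the continuation argument simply spell out what the paper's ``by similar reasoning'' and ``in particular $u$ extends'' leave implicit.
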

\begin{proof} We can estimate using $u >r_\sigma$,
\begin{align*}
|u(x) - \sigma x| \leq &\frac{2(n-1)}{\sigma} \int_x^\infty \frac{1}{t^2}\bigg\{\int_t^\infty \frac{s}{2}(1+(u'(s))^2) e^{-\int_t^s \frac{z}{2}(1+(u'(z))^2)dz} ds\bigg\} dt\\
\leq &\frac{2(n-1)}{\sigma x}
\end{align*}
and by similar reasoning obtain the estimate for $u'$.
\end{proof}
We can thus assume without loss of generality that $d=0$. 

To prove existence of a solution $u_\sigma$ for any $\sigma$,  we find it illustrative to construct each solution $u_\sigma$ as a limit of approximating solutions. More specifically, fixing a $\sigma > 0$, we solve the initial value problem
\beq\label{IVP}
\begin{cases}
&u''=\Big[\frac{x}{2}u'+\frac{n-1}{u}-\frac{u}{2}\Big]\big(1+(u')^2\big),\\
&u(a)=a\sigma,\\
&u'(a)=\sigma.
\end{cases}
\eeq
for $a$ positive. Denoting the solution $u_{\sigma, a}$, one derives the analogous identity 
\beq\label{MagicFormula}
u_{\sigma, a} (x)=(n-1)x\int_x^a \frac{1}{t^2} \bigg\{\int_t^a \frac{\left( 1 + u_{\sigma, a}'(s)^2 \right)}{u_{\sigma, a}(s)} e^{- \frac{1}{2} \int_t^s  z\left( 1 + u_{\sigma, a}'(z)^2 \right) dz}ds \bigg\}dt + \sigma x,
\eeq
for $x < a$. The lack of terms in this expression corresponding to the homogeneous equation is a special property of the initial conditions. One derives uniform estimates analogous to (\ref{FuncBound})--(\ref{GradBound}) for the solutions,
\begin{align}
&\sup_{s\in(x,a)}|u_{\sigma,a}(s)-\sigma s|\leq \frac{2(n-1)}{\sigma x},\\
&\sup_{s\in(x,a)}|u_{\sigma,a}'(s)-\sigma|\leq\frac{2(n-1)}{\sigma x^2},
\end{align}
for any $x<a$. This gives that each solution $u_{\sigma, a}$ extends to $(0, a)$, and by compactness that the family $\left\{u_a \right\}_{a> 0}$ converges to a limiting solution $u_\sigma$ on $(0, \infty)$, uniformly in the $C^2$-norm on compact sub-intervals.

Note however, that each approximate solution is really only approximate: Lemma \ref{BlowUpLemma} implies that they do not remain graphical for values of $x$ much larger than $a$, but bend upwards with $u'_{\sigma,a}(x)\to\infty$ as $x\to x_\infty<\infty$.

We next prove that any solution $u_\sigma:(0,\infty)\to\Reals^+$ asymptotic to the ray $r_\sigma$ is unique. Recall that we have shown that, given a $\sigma$, any solution $u_\sigma$ must satisfy
\begin{equation}\label{SolProperties}
u_\sigma(x) > \sigma x
\end{equation}
as well as the $L^\infty$-estimates in (\ref{FuncBound})--(\ref{GradBound}). Consider for $b>0$ the Banach space
\[
C^1_0([b,\infty))=\left\{v:[b,\infty)\to\Reals\mid v,v'\in C_0([b,\infty))\right\}
\]
of continuously differentiable functions $v$ such that $|v(x)|\to 0$ and $|v'(x)|\to 0$ as $x\to+\infty$, endowed with the uniform $C^1$-norm $\|v\|_{C^1}=\|v\|_{\infty}+\|Dv\|_{\infty}$, where the supremum is taken over $[b,\infty)$.

Also, for $b,\sigma>0$ we can for example consider the open subsets
\[
Y_{\sigma, b}:=\left\{v\in C^1_0([b,\infty))\:\Big|\:v(x) > 0, \quad |v'(x)|<\frac{4(n-1)}{\sigma x^2}\right\},
\]
so that by the estimates (\ref{FuncBound})--(\ref{GradBound}) the solutions to (\ref{SSODE}) - $\sigma x$ are in $Y_{\sigma, b}$.

Then we will show that the non-linear mapping $T_\sigma$ on $Y_{\sigma, b}$ given by
\[
[T_\sigma v](x) = 2(n-1)x \int_x^\infty \frac{1}{t^2}\bigg\{\int_t^\infty \frac{1+(v'(s)+\sigma)^2}{v(s)+\sigma s} \frac{s}{2}e^{-\int_t^s \frac{z}{2}(1+(v'(z)+\sigma)^2)dz} ds\bigg\} dt
\]
is a contraction, if $b=b(n,\sigma)$ is chosen large enough. Note that if $u$ is a solution to the equation (\ref{SSODE}), then by the integral identity in Lemma \ref{MagicLemma}
\[
[\tilde{T}_\sigma u](x):=[T_\sigma (s\mapsto u(s)-\sigma s)](x)+\sigma x=u(x),
\]
and conversely, so that $v(x)+\sigma x$ solves equation (\ref{SSODE}) if and only if $T_\sigma v=v$.

In fact $T_{\sigma,b}$ is well-defined, and we get the mapping property
\[
T_\sigma: Y_{\sigma, b} \rightarrow Y_{\sigma, b},
\]
as follows similarly to the proofs of the estimates in Lemma \ref{SupEstimates} and of the properties (\ref{SolProperties}), using the integral identity in Lemma  \ref{MagicLemma}.
\begin{proposition}
There exists $b_0=b_0(n,\sigma)$ such that $T_\sigma$ is a contraction for the norm $\|\cdot\|_{C^1}$ on the set of functions $Y_{\sigma, b}$ for $b\geq b_0$.
\end{proposition}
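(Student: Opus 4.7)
The plan is to show that both $\|T_\sigma v_1 - T_\sigma v_2\|_\infty$ and the analogous bound on the derivatives decay like $1/b$ as $b\to\infty$, which gives the contraction. To set notation, write $\phi(s;v) = 1+(v'(s)+\sigma)^2$ and $\psi(s;v) = v(s)+\sigma s$, so that
\[
[T_\sigma v](x) = 2(n-1)x\int_x^\infty \frac{G(t;v)}{t^2}\, dt,\qquad G(t;v) = \int_t^\infty \frac{s\phi(s)}{2\psi(s)}\, e^{-\int_t^s z\phi(z)/2\, dz}\, ds.
\]
For $v\in Y_{\sigma,b}$ with $b$ sufficiently large depending on $(n,\sigma)$, one has the uniform bounds $\psi(s;v)\geq \sigma s$ (from $v>0$) and $v'(s)+\sigma \in [\sigma/2,\,3\sigma/2]$ (from $|v'|<4(n-1)/(\sigma b^2)$), and hence $\phi(s;v)\in [1+\sigma^2/4,\,1+9\sigma^2/4]$. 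These uniform control bounds are what make the contraction feasible.

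I would then decompose the integrand of $G(t;v_1)-G(t;v_2)$ telescopically, replacing $\phi$, then $1/\psi$, then the exponential weight one at a time. The algebraic identities $(v_1'+\sigma)^2-(v_2'+\sigma)^2 = (v_1'-v_2')(v_1'+v_2'+2\sigma)$ and $1/\psi_1 - 1/\psi_2 = (\psi_2-\psi_1)/(\psi_1\psi_2)$, together with the mean value estimate $|e^{-A}-e^{-B}|\leq e^{-\min(A,B)}|A-B|$ (where $\min(A,B)\geq (1+\sigma^2/4)(s^2-t^2)/4$ and $|A-B|\leq C\sigma(s^2-t^2)\|v_1-v_2\|_{C^1}$), reduce the matter to three Gaussian-weighted integrals, each linearly dominated by $\|v_1-v_2\|_{C^1}$.

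For these integrals I would apply the standard erfc-type bounds
\[
\int_t^\infty e^{-c(s^2-t^2)}\, ds \leq \frac{C}{ct},\qquad \int_t^\infty (s^2-t^2)\, e^{-c(s^2-t^2)}\, ds \leq \frac{C}{c^2 t},
\]
with $c=c(n,\sigma)>0$, to obtain $|G(t;v_1)-G(t;v_2)|\leq \frac{C(n,\sigma)}{t}\|v_1-v_2\|_{C^1}$. The outer integral then yields $|[T_\sigma v_1-T_\sigma v_2](x)|\leq \frac{C_1(n,\sigma)}{x}\|v_1-v_2\|_{C^1}$. Differentiating under the integral sign gives $[T_\sigma v]'(x) = 2(n-1)\int_x^\infty G(t;v)/t^2\, dt - (2(n-1)/x)G(x;v)$, and rerunning the same estimates produces $|[T_\sigma v_1-T_\sigma v_2]'(x)|\leq \frac{C_2(n,\sigma)}{x^2}\|v_1-v_2\|_{C^1}$. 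Taking the supremum over $x\geq b$ and summing,
\[
\|T_\sigma v_1 - T_\sigma v_2\|_{C^1} \leq \left(\frac{C_1}{b}+\frac{C_2}{b^2}\right)\|v_1-v_2\|_{C^1},
\]
which is strictly less than $1$ for $b\geq b_0(n,\sigma)$ large enough.

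The main technical obstacle is the exponential-difference contribution, where the factor $|A-B|$ grows like $(s^2-t^2)\|v_1-v_2\|_{C^1}$ and could in principle overwhelm the decay. The key point is that the uniform lower bound $\phi\geq 1+\sigma^2/4$ on all of $Y_{\sigma,b}$ forces Gaussian decay $e^{-c(s^2-t^2)}$ at a rate depending only on $(n,\sigma)$, and this absorbs the polynomial growth to give the $1/t$ bound on $G$ needed to close the outer integration. The role of the definition of $Y_{\sigma,b}$ — in particular the smallness of $v'$ and strict positivity of $v$ — is precisely to supply these uniform two-sided bounds on $\phi$ and $\psi$.
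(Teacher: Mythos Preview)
Your proposal is correct and follows essentially the same route as the paper: a three-term telescoping decomposition (in $\phi$, $1/\psi$, and the exponential weight), the mean-value estimate $|e^{-A}-e^{-B}|\leq e^{-\min(A,B)}|A-B|$, and Gaussian tail bounds, combined to show the Lipschitz constant decays like $1/b$. The only cosmetic differences are that the paper computes the double integrals directly (via the substitution $\tau=(s^2-t^2)/4$) rather than quoting erfc-type inequalities, and uses the cruder but sufficient lower bound $\phi\geq 1$ in place of your $\phi\geq 1+\sigma^2/4$.
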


\begin{proof}
For two functions $v_1, v_2\in Y_{\sigma,b}$ we may write $u_i(x)=v_i(x)+\sigma x$ and get for $\tilde{T}_\sigma$ the expression:
\begin{eqnarray*}
\tilde{T}_\sigma u_2 - \tilde{T}_\sigma u_1 & = & 2(n-1)x\int_x^\infty \frac{1}{t^2} \int_t^\infty \left( \frac{1}{u_2} - \frac{1}{u_1} \right) \frac{s}{2}(1+(u_2')^2) e^{-\int_t^s \frac{1}{2}z(1+(u_2'(z))^2)} \\
& + & 2(n-1)x\int_x^\infty \frac{1}{t^2} \int_t^\infty \frac{\frac{s}{2}(1+(u_2')^2)}{u_1}\left(e^{-\int_t^s \frac{z}{2}(1+(u_2')^2)} - e^{-\int_t^s\frac{z}{2}(1+(u_1')^2)} \right) \\
& + & 2(n-1)x\int_x^\infty \frac{1}{t^2} \int_t^\infty \frac{1}{u_1} \frac{s}{2}\left((u_2')^2 - (u_1')^2\right) e^{-\int_t^s \frac{z}{2}(1+(u_1')^2)}\\
& =: & A + B + C.
\end{eqnarray*}
We estimate the term $A$ by
\[
A \leq 2(n-1)\frac {|| u_2 - u_1 ||_\infty}{\sigma^2x^2}.
\]
The term $C$ may be estimated by 
\begin{align*}
C &\leq || u_2' - u_1'||_\infty ||u_2' + u_1'||_\infty \frac{n-1}{\sigma}\int_x^\infty \frac{1}{t^2}\bigg\{\int_t^\infty s e^{\frac{1}{4} (t^2 - s^2)} ds\bigg\}dt\\
& = ||u_2'-u_1'||_\infty ||u_2' + u_1'||_\infty \frac{ 2(n-1)}{ \sigma x}.
\end{align*}
To estimate the term $B$, we note that, for real numbers $x, y \leq c$, one has $|e^y - e^x| \leq e^c |y - x|$ so that we may estimate term $B$ as follows:
\begin{eqnarray*} \notag
B & \leq  &||u_2' + u_1' ||_\infty ||u_2' - u_1' ||_\infty || 1 + (u_2')^2||_\infty \frac{(n-1)x}{\sigma} \int_x^\infty \frac{1}{t^2}\bigg\{\int_t^\infty \frac{1}{4}(s^2 - t^2)e^{\frac{1}{4}(t^2 - s^2)}  ds\bigg\}dt \\ \notag
& \leq & ||u_2' + u_1' ||_\infty ||u_2' - u_1' ||_\infty || 1 + (u_2')^2||_\infty \frac{(n-1)x}{2 \sigma} \int_x^\infty \frac{1}{t^3}\bigg\{\int_t^\infty \frac{1}{2}(s^2 - t^2)e^{\frac{1}{4}(t^2 - s^2)} s ds\bigg\}dt \\ \notag
& = & ||u_2' + u_1' ||_\infty ||u_2' - u_1' ||_\infty || 1 + (u_2')^2||_\infty \frac{(n-1)x}{2 \sigma} \int_x^\infty \frac{1}{t^3}\bigg\{\int_0^\infty \tau e^{- \tau} d\tau\bigg\} dt \\
& \leq & ||u_2' + u_1' ||_\infty ||u_2' - u_1' ||_\infty || 1 + (u_2')^2||_\infty \frac{(n-1)}{2 \sigma x}.
\end{eqnarray*}

Also, we may write
\[
(\tilde{T}_\sigma u )' = \frac{\tilde{T}_\sigma(u)}{x} - \frac{2(n-1)}{x} \int_x^\infty \frac{\frac{s}{2}\left( 1 + (u'(s))^2 \right)}{u(s)} e^{-\int_x^s  \frac{z}{2}\left( 1 + (u'(z))^2 \right) dz}ds+\sigma,
\]
and from this representation formula similarly get, for $p_i(s)=\frac{s}{2}(1+(u_i'(s))^2)$:
\begin{eqnarray} \notag
(\tilde{T}_\sigma u_2)' - (\tilde{T}_\sigma u_1)' & =  &\frac{1}{x}\left(T_\sigma u_2 - T_\sigma u_1 \right)  \\ \notag
 & - & \frac{2(n - 1)}{x} \int_x^\infty \left(\frac{1}{u_2(s)} - \frac{1}{u_1(s)}\right)p_2(s) e^{-\int_x^s p_2(z) dz} \\ \notag
 & - & \frac{2(n - 1)}{x} \int_x^\infty \frac{p_2(s)}{u_1(s)}\left(e^{-\int_x^s p_2(z) dz} - e^{- \int_x^s p_1(z) dz} \right) \\ \notag
 & - & \frac{2(n - 1)}{x} \int_x^\infty \frac{1}{u_1(s)} \left(p_2(s) - p_1(s) \right)e^{- \int_x^s p_1(z)dz} \\ \notag
 & = & \frac{1}{x} \left(T_\sigma u_2 - T_\sigma u_1 \right) - A' - B' - C'.
\end{eqnarray}
Then the terms $A'$, $B'$, and $C'$ may be treated similarly to the terms $A$, $B$, and $C$ before.

Thus we see, going back to $v_i$ and to $T_\sigma$, that:
\beq\label{Contraction}
||T_\sigma v_2 - T_\sigma v_1||_{C^1} < \tau ||v_2 - v_1||_{C^1},
\eeq
for some $0<\tau<1$, if we choose $b_0$ large enough, and with the $C^1$-norm taken over $(b_0,\infty)$. Thus $T_\sigma: Y_{\sigma,b_0} \rightarrow Y_{\sigma,b_0}$ is a contraction.

Note also that a family version of the Proposition follows, that is if $0<\sigma_i<\infty$ are given, then $b_0$ and $\tau$ can be chosen so that (\ref{Contraction}) holds uniformly for $\sigma\in[\sigma_1,\sigma_2]$
\end{proof}

Applying the Proposition shows the claimed uniqueness for graphs over half-lines satisfying Equation (\ref{SSODE}). Namely, let two solutions $u_1$ and $u_2$ to the equation for the same $\sigma$-value be given. Then for $b_0$ chosen large enough we have $u_1,u_2\in Y_{\sigma,b_0}$ and the result follows.

\begin{remark}
Since the map $T_\sigma$ is a contraction for large enough $x$-values, one can also prove the existence part (for large $x$) of Theorem \ref{ThmConicalEndsWordyVersion} using a fixed point principle.
\end{remark}
The graphs of the  functions $u_\sigma$ constructed above are eventually graphical over the $r$-axis as well (since they are eventually increasing), and are given by functions $f_{1/\sigma}: [r_{1 / \sigma}, \infty] \rightarrow \mathbb{R}$ on some maximal domain $(r_{1 / \sigma},  \infty)$. The functions $f_{1/\sigma}$ then satisfy equation (\ref{r_graph}), and an analysis similar to that in the proof of Lemma \ref{MagicLemma} gives that the $f_{1/\sigma}$ satisfy the identity
\beq\label{r_graph_identity}
S_{\sigma}f_{1/\sigma}=f_{1/\sigma},
\eeq
where the map $S_{\sigma}$ given by 
\begin{equation}
f \mapsto  \frac{r}{\sigma} - (n - 1)r \int_r^\infty \frac{1}{t^2}\int_t^\infty f'(s) \left(1 + (f'(s))^2 \right) e^{-\int_t^s\frac{z}{2}(1 + (f'(z))^2)dz}ds dt,
\end{equation}
which is then similarly shown to be a contraction mapping. The fixed points of the maps $S_{1 / \sigma} $ and $T_\sigma$ then determine a complete geodesic $\gamma_\sigma$ in the upper half plane.  We now show that the $\gamma_\sigma$ depend smoothly on the parameter $\sigma$ in the $C^k$ topology. For this, we will need the following general fact, proved in the Appendix.

\begin{lemma}\label{FixSmooth}
Let $\Phi_\sigma: Y \rightarrow Y$ be a smooth one parameter family of smooth contraction mappings on a fixed open subset $Y$ of a Banach space $X$. Then the fixed points $x_\sigma$ (assumed to exist) are smooth functions of $\sigma$.
\end{lemma}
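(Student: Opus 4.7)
The plan is to invoke the Banach-space implicit function theorem applied to the map $F:Y\times\Reals\to X$ defined by $F(x,\sigma):=x-\Phi_\sigma(x)$. By hypothesis $F$ is smooth jointly in $(x,\sigma)$, and the fixed-point identity $x_\sigma=\Phi_\sigma(x_\sigma)$ is equivalent to $F(x_\sigma,\sigma)=0$.

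First I would verify that the partial Fr\'echet derivative $D_xF(x,\sigma)=\mathrm{Id}-D_x\Phi_\sigma(x)$ is a bounded linear automorphism of $X$ for every $(x,\sigma)$. For this I use the standard fact that the operator norm of a Fr\'echet derivative is dominated by the Lipschitz constant of the underlying map: since each $\Phi_\sigma:Y\to X$ has Lipschitz constant $\tau_\sigma<1$, one has $\|D_x\Phi_\sigma(x)\|_{\mathrm{op}}\leq \tau_\sigma<1$ at every $x\in Y$, directly from the definition of the derivative as a limit of difference quotients. Consequently the Neumann series $\sum_{k\geq 0}\bigl(D_x\Phi_\sigma(x)\bigr)^k$ converges in operator norm to a two-sided inverse of $\mathrm{Id}-D_x\Phi_\sigma(x)$ of norm at most $1/(1-\tau_\sigma)$. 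In particular $D_xF(x_\sigma,\sigma)$ is a topological isomorphism of $X$.

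The smooth implicit function theorem between Banach spaces then produces, on a neighborhood of each $\sigma_0$, a unique smooth map $\sigma\mapsto\tilde{x}_\sigma\in Y$ satisfying $F(\tilde{x}_\sigma,\sigma)=0$. Since the fixed point of the contraction $\Phi_\sigma$ in $Y$ is unique, one must have $\tilde{x}_\sigma=x_\sigma$ throughout this neighborhood. As $\sigma_0$ was arbitrary, $\sigma\mapsto x_\sigma$ is smooth, and one may even read off the formula $\partial_\sigma x_\sigma=(\mathrm{Id}-D_x\Phi_\sigma(x_\sigma))^{-1}\partial_\sigma\Phi_\sigma(x_\sigma)$ for its derivative.

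The proof is really just the Banach implicit function theorem, so the only substantive point — and hence the main thing to get right — is the passage from the global contraction hypothesis to the pointwise operator-norm bound $\|D_x\Phi_\sigma(x)\|_{\mathrm{op}}\leq \tau_\sigma$ needed for the Neumann series; once that is in place, the rest of the argument is abstract and mechanical.
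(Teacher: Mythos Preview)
Your argument is correct, and it rests on the same decisive observation as the paper's own proof: the contraction hypothesis forces $\|D_x\Phi_\sigma(x)\|<1$, so $\mathrm{Id}-D_x\Phi_\sigma(x)$ is invertible via a Neumann series, and both proofs arrive at the identical derivative formula
\[
\frac{dx_\sigma}{d\sigma}=\bigl(\mathrm{Id}-D_{x_\sigma}\Phi_\sigma\bigr)^{-1}\,\partial_\sigma\Phi_\sigma(\sigma,x_\sigma).
\]
The packaging differs, however. The paper proceeds by hand: it first shows $\sigma\mapsto x_\sigma$ is Lipschitz by a direct triangle-inequality estimate using the contraction constant, then writes out the first-order expansion of $x_{\sigma+h}-x_\sigma=\Phi_{\sigma+h}(x_{\sigma+h})-\Phi_\sigma(x_\sigma)$, rearranges to isolate the difference quotient, and passes to the limit. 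You instead invoke the Banach implicit function theorem as a black box. Your route is shorter and more conceptual; the paper's is fully self-contained and avoids quoting the IFT. Either way the substantive content is the single inequality $\|D_x\Phi_\sigma\|<1$, which you identified correctly.
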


Thus, the solutions $\gamma_\sigma$ depend smoothly in $C^1$ on the parameter $\sigma$. However, the geodesic equation gives that the dependence is smooth in $C^k$ for any $k$.

\begin{lemma}\label{f_sigma_smooth}
The map $\sigma \mapsto f_{\sigma}: \mathbb{R}^+ \rightarrow C^k$ is smooth for any $k$.
\end{lemma}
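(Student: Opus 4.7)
The plan is to bootstrap the $C^1$-regularity of $\sigma \mapsto f_\sigma$ coming from Lemma \ref{FixSmooth} up to $C^k$-regularity for every $k$, using smooth dependence on initial data for the ODE (\ref{r_graph}). Applying Lemma \ref{FixSmooth} to the family of contractions $S_{1/\sigma}$ (in its family version analogous to the one stated for $T_\sigma$) gives that $\sigma \mapsto f_{1/\sigma}$ is smooth with values in the $C^1$-Banach space on a half-ray $[b, \infty)$, with $b = b(n, \sigma)$ large and uniform on compact $\sigma$-intervals. In particular, for any fixed $b' \geq b$ the Cauchy data $\sigma \mapsto (f_\sigma(b'), f_\sigma'(b'))$ is a smooth map $\Reals^+ \to \Reals^2$.

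Next I fix $\sigma_0 > 0$ and a compact subinterval $K$ of the maximal domain $(r_{1/\sigma_0}, \infty)$. Rewriting (\ref{r_graph}) as $f''(r) = F(r, f, f')$ with
\[
F(r, y, p) := \left[\left(\tfrac{r}{2} - \tfrac{n-1}{r}\right) p - \tfrac{y}{2}\right](1 + p^2),
\]
smooth on $(0, \infty) \times \Reals \times \Reals$, and viewing it as a first-order system on $(0, \infty) \times \Reals^2$, the classical theorem on smooth dependence of ODE solutions on initial data yields that the flow $(r; y_0, p_0) \mapsto (\Phi(r), \Phi'(r))$ starting at $r = b'$ with Cauchy data $(y_0, p_0)$ is jointly smooth in all three arguments on its open domain of existence. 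Pre-composing with the smooth map $\sigma \mapsto (f_\sigma(b'), f_\sigma'(b'))$ will deliver joint $C^\infty$-regularity of $(\sigma, r) \mapsto f_\sigma(r)$ on a neighborhood of $\{\sigma_0\} \times K$, which is equivalent to smoothness of $\sigma \mapsto f_\sigma|_K$ as a map into $C^k(K)$ for every $k$.

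The main (minor) obstacle is verifying that a compact $K \subset (r_{1/\sigma_0}, \infty)$ actually sits inside the maximal domain of $f_\sigma$ for all $\sigma$ in a neighborhood of $\sigma_0$; this follows from continuity of $\sigma \mapsto r_{1/\sigma}$, itself obtained from the same smooth-dependence theorem applied to the backward extension of the ODE in $r$. As a more hands-on alternative to the classical ODE theorem, one can instead bootstrap inductively: writing $f_\sigma'' = F(\cdot, f_\sigma, f_\sigma')$, smoothness of $\sigma \mapsto f_\sigma$ into $C^k(K)$ together with smoothness of the Nemytskii composition with the smooth $F$ gives smoothness of $\sigma \mapsto f_\sigma''$ into $C^{k-1}(K)$, i.e.\ smoothness of $\sigma \mapsto f_\sigma$ into $C^{k+1}(K)$. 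Iterating starting from $k = 1$ then delivers all $k$.
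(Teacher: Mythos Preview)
Your proposal is correct, and in fact your ``more hands-on alternative'' bootstrap (using $f_\sigma'' = F(\cdot,f_\sigma,f_\sigma')$ to upgrade $C^k$-smoothness in $\sigma$ to $C^{k+1}$) is exactly the paper's (two-sentence) proof. Your primary route via the classical smooth-dependence-on-initial-data theorem for ODEs is a valid and slightly more structural alternative, but it is more machinery than the paper invokes; since Lemma~\ref{FixSmooth} already hands you $C^1$-smoothness of $\sigma\mapsto f_\sigma$, the direct bootstrap from the equation is the shortest path.
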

\begin{proof}
The equation 
\begin{equation}
f''_{\sigma}(r) = \left\{ \left(\frac{r}{2} - \frac{n - 1}{r} \right)f_{\sigma}'(r) - \frac{f_{\sigma}(r)}{2} \right\}(1 + (f_{\sigma}')^2),
\end{equation}
immediately gives that the second derivative $f''_{\sigma}$ is differentiable in $\sigma$. Differentiating (\ref{r_graph}) in $r$ then gives that all higher derivatives $f_{\sigma}^{(k)}$ are differentiable in $\sigma$ as well.
\end{proof}

By Lemma \ref{f_sigma_smooth}, the function $F(\sigma,r)$ given by
\begin{equation}
F(\sigma, r) = f_{\sigma}(r) 
\end{equation}
is smooth on its domain of definition. Note that, as $\sigma \rightarrow \infty$ the functions $f_{1 /\sigma}$ converge to the function $ f_0 (r) \equiv 0$  uniformly in $C^k$ on compact subsets of $(0, \infty)$ for any $k$. Thus, defining $\hat{\sigma} = 1 / \sigma$, it follows that the function $g(r) = \frac{df}{d \hat{\sigma}}|_{\hat{\sigma} = 0}(r)$ is defined on $(0, \infty)$ and satisfies the linearized equation
\begin{equation} \label{linearized_equation}
g''(r) = \left(\frac{r}{2} - \frac{n - 1}{r}\right) g'(r) - \frac{g(r)}{2} .
\end{equation}

To analyze solutions of the linearized equation, we again prove an integral identity.

\begin{lemma} The solution to the linearized equation $g$ above satisfies the identity
\begin{equation} \label{linearized_identity}
 g(r) = r - (n - 1)r\int_r^\infty \frac{1}{t^2} \int_t^\infty g'(s) e^{(t^2 - s^2)/4} ds dt.
\end{equation}
\end{lemma}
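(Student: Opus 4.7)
The cleanest route is to differentiate in $\hat\sigma$ the fixed-point identity (\ref{r_graph_identity}) already established for $f_{\hat\sigma}$, and then evaluate at $\hat\sigma = 0$. Written out with $\hat\sigma = 1/\sigma$, the identity reads
\[
f_{\hat\sigma}(r) = \hat\sigma\,r - (n-1)\,r\int_r^\infty \frac{1}{t^2}\int_t^\infty f_{\hat\sigma}'(s)\bigl(1+(f_{\hat\sigma}'(s))^2\bigr)\,e^{-\int_t^s \frac{z}{2}(1+(f_{\hat\sigma}'(z))^2)\,dz}\,ds\,dt,
\]
and the linear term immediately contributes $\partial_{\hat\sigma}(\hat\sigma r)\big|_{\hat\sigma=0} = r$ to $g(r)$.

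For the double integral, the key point is that $f_0\equiv 0$ and hence $f_0'\equiv 0$, so every quadratic-or-higher term in $f_{\hat\sigma}'$ drops out under $\partial_{\hat\sigma}$ at zero. Concretely, the factor $f_{\hat\sigma}'(s)\bigl(1+(f_{\hat\sigma}'(s))^2\bigr)$ linearizes to $g'(s)$ (the $2f_{\hat\sigma}'\partial_{\hat\sigma}f_{\hat\sigma}'$ piece from the second factor vanishes at $\hat\sigma=0$), and the exponential's argument linearizes to $-\int_t^s (z/2)\,dz = (t^2-s^2)/4$ while the $\hat\sigma$-derivative of the exponential itself vanishes at zero (carrying a factor of $f_0'$). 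Combining, I obtain
\[
g(r) = r - (n-1)\,r\int_r^\infty \frac{1}{t^2}\int_t^\infty g'(s)\,e^{(t^2-s^2)/4}\,ds\,dt,
\]
which is (\ref{linearized_identity}).

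The one technical point is justifying the interchange of $\partial_{\hat\sigma}$ with the double integral. The $C^k$-smoothness of $\hat\sigma \mapsto f_{\hat\sigma}$ from Lemma \ref{f_sigma_smooth}, together with the Gaussian decay of $e^{-(s^2-t^2)/4}$ in $s$ and the $1/t^2$ weight in $t$, provides an integrable majorant on $\{(t,s):s\geq t\geq r\}$ uniformly for $\hat\sigma$ in a neighborhood of $0$. Dominated convergence then legitimizes differentiation under the integral, completing the argument; I expect no obstacle beyond this routine but slightly tedious bookkeeping. An alternative, somewhat longer path would construct a particular solution of the inhomogeneous linear ODE $Lg = -(n-1)g'/r$ (with $L := \partial_r^2 - (r/2)\partial_r + 1/2$) by variation of parameters against the homogeneous basis $h_1(r)=r$ and $h_2(r)=r\int_c^r e^{s^2/4}/s^2\,ds$, and then match constants via $g(r)/r\to 1$ and the polynomial growth of $g$ ruling out the Gaussian-growing mode $h_2$; both approaches yield the same identity.
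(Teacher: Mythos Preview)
Your approach is correct and essentially the same as the paper's: both pass the fixed-point identity (\ref{r_graph_identity}) to its $\hat\sigma$-derivative at $0$ (equivalently, divide by $\hat\sigma$ and let $\hat\sigma\to 0$) and invoke dominated convergence. The only notable difference is that the paper makes the majorant explicit---it first differentiates (\ref{r_graph_identity}) in $r$ to obtain the uniform bound $f'_{\hat\sigma}(r)/\hat\sigma < (1 - 2(n-1)/r^2)^{-1}$, which directly dominates the integrand---whereas your appeal to Lemma~\ref{f_sigma_smooth} plus Gaussian decay is correct but less self-contained.
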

\begin{proof}
Differentiating identity (\ref{r_graph_identity}) (with $\hat{\sigma} = 1 / \sigma$), we obtain
\begin{equation}
f'_{\hat{\sigma}}(r) = f_{\hat{\sigma}}/r + \frac{n - 1}{r} \int_r^\infty f'_{\hat{\sigma}}(s)(1 + (f'_{\hat{\sigma}}(s))^2)e^{-\int_r^s\frac{z}{2}(1 + (f'_{\hat{\sigma}}(z))^2)dz} ds.
\end{equation}
Thus, for $r > 2(n - 1)$, we get
\begin{equation} \label{dominated}
f'_{\hat{\sigma}} (r) / \hat{\sigma} < (1 - 2(n - 1)/ r^2)^{-1}.
\end{equation}

Now, since
\begin{equation}
 f_{\hat{\sigma}}'(s) / \hat{\sigma} \left(1 + (f_{\hat{\sigma}}'(s))^2 \right) e^{-\int_t^s\frac{z}{2}(1 + (f_{\hat{\sigma}}'(z))^2)dz} \rightarrow g'(s) e^{(t^2 - s^2)/4},\quad\textrm{as}\quad \hat{\sigma} \rightarrow 0.
\end{equation}
the above estimate (\ref{dominated}) gives convergence of the equation (\ref{r_graph_identity}) divided by $\hat{\sigma} = 1 / \sigma$, as $\hat{\sigma} \rightarrow 0 $ to (\ref{linearized_identity}) by, for example, the dominated convergence theorem.
\end{proof} 

\begin{corollary}\label{pos_and_neg}
The solution $g$ to the linearized equation assumes both positive and negative values on $(0, \infty)$.
\end{corollary}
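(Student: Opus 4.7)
The plan is to split the claim into its two sign assertions and attack each using the integral identity (\ref{linearized_identity}). The positive value is almost immediate: the estimate (\ref{dominated}) passes to the limit and shows that $g'$ is uniformly bounded for large $r$, so the Gaussian kernel forces $\int_t^\infty g'(s)e^{(t^2-s^2)/4}\,ds = O(1/t)$ and hence the full iterated integral is $O(1/r^2)$. Substituting into (\ref{linearized_identity}) yields $g(r)/r \to 1$ as $r \to \infty$, so in particular $g(r) > 0$ for all sufficiently large $r$.

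For the negative value I would argue by contradiction, assuming $g \ge 0$ throughout $(0, \infty)$. First I would rewrite (\ref{linearized_equation}) in Sturm--Liouville form with weight $\mu(r) := r^{n-1} e^{-r^2/4}$, namely
\[
(\mu g')' + \tfrac{1}{2}\mu g = 0.
\]
Then $g \ge 0$ gives $(\mu g')' \le 0$, so $\mu g'$ is non-increasing; combined with $\mu g' \to 0$ as $r \to \infty$ (which follows from the super-exponential decay of $\mu$ and from $g'(r) \to 1$), this forces $\mu g' \ge 0$ everywhere, so $g' \ge 0$ on $(0,\infty)$. Moreover $g$ cannot be constant, for then the linearized ODE would force $g \equiv 0$, in contradiction with (\ref{linearized_identity}) which would read $g(r) = r$. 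Hence $g'(s_0) > 0$ for some $s_0 > 0$, and by continuity $g' \ge \eta > 0$ on some interval $I := (s_0 - \delta, s_0 + \delta)$.

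Now I feed this back into (\ref{linearized_identity}) to produce the contradiction near $r = 0$. For any $r \in (0, s_0 - \delta)$, restricting the outer integration to $t \in (r, s_0 - \delta)$ and the inner to $s \in I$ and using the pointwise bound $e^{(t^2-s^2)/4} \ge e^{-(s_0+\delta)^2/4}$ on this box yields
\[
\int_r^\infty \frac{1}{t^2}\int_t^\infty g'(s)\,e^{(t^2-s^2)/4}\,ds\,dt \;\ge\; c\left(\frac{1}{r} - \frac{1}{s_0-\delta}\right),
\]
where $c := 2\eta\delta\, e^{-(s_0+\delta)^2/4} > 0$. The right-hand side diverges as $r \to 0^+$, so the identity (\ref{linearized_identity}) forces $g(r) < 0$ for all sufficiently small $r$, contradicting $g \ge 0$.

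The main technical obstacle I anticipate is justifying the asymptotics $g'(r) \to 1$ and $\mu g'(r) \to 0$ at infinity; these are what let the Sturm--Liouville monotonicity of $\mu g'$ be upgraded to pointwise positivity of $g'$. Once these are in hand the remainder is a direct lower bound on the iterated integral in (\ref{linearized_identity}) near $r = 0$.
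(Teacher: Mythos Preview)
Your argument is correct, and its overall architecture matches the paper's: assume $g$ has one sign, deduce $g'\geq 0$ everywhere, then plug $g'\geq 0$ into the identity (\ref{linearized_identity}) to force $g<0$ near $r=0$. The differences are in the two substeps.

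\emph{For ``$g'\geq 0$'':} the paper uses a bare-hands ODE argument. If $g'(r_0)\leq 0$ at some $r_0$, then at any first later zero $r_1$ of $g'$ one would have $g''(r_1)=-g(r_1)/2<0$, so in fact $g'<0$ on $(r_0,\infty)$; then for $r>\sqrt{2(n-1)}$ equation (\ref{linearized_equation}) gives $g''<0$, and concavity with negative slope drives $g$ below zero. Your route via the self-adjoint form $(\mu g')'=-\tfrac12\mu g$ is more structural but hinges on the asymptotic you flag. Note, though, that you do not need $g'\to 1$: since $\mu g'$ is monotone it has a limit $L\in[-\infty,\infty)$, and $L\neq 0$ would give $|g'|\geq c/\mu$ eventually, which integrates to super-exponential growth or decay of $g$ and contradicts either $g(r)/r\to 1$ (your first paragraph) or $g\geq 0$. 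So the obstacle dissolves once you use the first part of your own proof.

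\emph{For the contradiction at $r=0$:} the paper evaluates the exact limit
\[
\lim_{r\to 0^+}g(r)=-(n-1)\int_0^\infty g'(s)\,e^{-s^2/4}\,ds<0,
\]
obtained from (\ref{linearized_identity}) by l'H\^{o}pital. Your box lower bound is coarser but perfectly adequate, and is valid because the pieces you discard are nonnegative once $g'\geq 0$.

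\emph{For ``the other sign'':} you argue directly that $g(r)/r\to 1$, hence $g>0$ for large $r$; the paper instead observes that (\ref{linearized_equation}) is linear homogeneous, so $g<0$ everywhere would reduce to the case already handled by replacing $g$ with $-g$.

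In summary: both proofs work. The paper's ODE propagation is shorter and sidesteps the asymptotic issue entirely; your Sturm--Liouville argument is a clean alternative once you patch the limit of $\mu g'$ as above.
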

\begin{proof}
Assume first that $g > 0$ everywhere on $(0, \infty)$. Note that we must then also have $g' > 0$ everywhere. For suppose $g'(r_0) \leq 0$ at some $r_0$. Then appealing to equation (\ref{linearized_equation}), we see that $g'(r) < 0$ for all $r > r_0$. In particular, for $r > \sqrt{2(n - 1)}$, we get $g''(r) < 0$, which implies that the graph of $g$ will eventually intersect the $r$-axis, a contradiction.

Thus we have $g' > 0$ on $(0, \infty)$. However, the identity (\ref{linearized_identity}) then gives the contradiction 
\begin{equation}
\lim_{r \rightarrow 0 }g(r) = -(n - 1)\int_0^\infty g'(s) e^{- s^2/4} ds< 0.
\end{equation}
Since the equation (\ref{linearized_identity}) is linear homogeneous, it follows that $g<0$ cannot hold either.
\end{proof}

\begin{lemma} \label{positive_slope_almost_there}
The functions $u_\sigma$ have positive slope on $[0, \infty)$ for $\sigma > 0$ sufficiently large. 
\end{lemma}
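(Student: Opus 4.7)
The plan is to rephrase the claim via the $r$-graph representation $f_{\hat\sigma}$, where $\hat\sigma := 1/\sigma$, and reduce it to a monotonicity property of the linearization $g$. Since $u_\sigma$ is the functional inverse of $f_{\hat\sigma}$ and $u_\sigma(0)$ equals the largest zero of $f_{\hat\sigma}$ on its domain (the right-most place where the generating curve meets $\{x=0\}$), the assertion $u_\sigma' > 0$ on $[0,\infty)$ is equivalent to $f_{\hat\sigma}'(r) > 0$ on $[u_\sigma(0),\infty)$. Because $f_{\hat\sigma}/\hat\sigma \to g$ in $C^k$ on compact subsets of $(0,\infty)$ by Lemma \ref{f_sigma_smooth}, it will suffice to show that $g$ has a largest positive zero $r_1$, that $g'(r_1) > 0$, and that $g' > 0$ throughout $[r_1, \infty)$.

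The existence of $r_1$ follows by combining (\ref{linearized_identity}), which yields $g(r)/r \to 1$ at infinity and hence $g > 0$ for large $r$, with Corollary \ref{pos_and_neg}; set $r_1 := \sup\{r > 0 : g(r) = 0\}$. Then $g > 0$ on $(r_1,\infty)$, so $g'(r_1) \geq 0$. Strict positivity $g'(r_1) > 0$ holds, since otherwise $g(r_1) = g'(r_1) = 0$ would force $g \equiv 0$ by uniqueness for (\ref{linearized_equation}), contradicting $g \sim r$.

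To promote this to $g' > 0$ on all of $[r_1, \infty)$, suppose for contradiction that $r_0 > r_1$ is the smallest zero of $g'$ past $r_1$. Since $g(r_0) > 0$, evaluating (\ref{linearized_equation}) at $r_0$ gives
\[
g''(r_0) = \Bigl(\tfrac{r_0}{2} - \tfrac{n-1}{r_0}\Bigr)\cdot 0 - \tfrac{g(r_0)}{2} = -\tfrac{g(r_0)}{2} < 0,
\]
so $g'$ becomes strictly negative just past $r_0$. The identical computation forbids $g'$ from ever returning to zero while $g$ remains positive: a smooth return at $r_* > r_0$ would require $g''(r_*) \geq 0$, contradicting $g''(r_*) = -g(r_*)/2 < 0$. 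Hence $g$ is strictly decreasing throughout $(r_0,\infty) \cap \{g > 0\}$; this is incompatible with $g(r) \to \infty$, so $g$ must vanish again at some $r_2 > r_0 > r_1$, contradicting maximality of $r_1$.

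Granting $g' > 0$ on $[r_1,\infty)$, the conclusion follows by a standard $C^1$-perturbation argument. Fix $R > 2\sqrt{n-1}$ and a small $\epsilon > 0$; by continuity $g' \geq c > 0$ on $[r_1-\epsilon,R]$, and uniform $C^1$-convergence of $f_{\hat\sigma}/\hat\sigma$ to $g$ on this compact set transfers positivity, yielding $f_{\hat\sigma}' > 0$ on $[u_\sigma(0),R]$ and $u_\sigma(0) \to r_1$ as $\hat\sigma \to 0$. The tail $r \geq R$ corresponds to $x \geq 2\sqrt{n-1}/\sigma$ via (\ref{FuncBound}), and the derivative estimate (\ref{GradBound}) there yields $u_\sigma'(x) \geq \sigma/2 > 0$ directly. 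The main obstacle is the monotonicity step for $g$: one must upgrade the qualitative sign-change from Corollary \ref{pos_and_neg}, together with the growth $g \sim r$, into strict positivity of $g'$ past the last zero, leveraging the sign structure of (\ref{linearized_equation}) at critical points of $g$.
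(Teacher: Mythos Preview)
Your proof is correct and close in spirit to the paper's, but it takes a slightly more circuitous route. The paper uses only the $C^0$ information from Corollary~\ref{pos_and_neg} (that $g$ is negative at some $r_0$), deduces $f_{\hat\sigma}(r_0)<0$ for small $\hat\sigma$, takes the largest zero $m_{\hat\sigma}$ of $f_{\hat\sigma}$ itself, and then asserts that the graph of $f_{\hat\sigma}\!\mid_{[m_{\hat\sigma},\infty)}$ is graphical over the $x$-axis. You instead first establish the stronger fact $g'>0$ on $[r_1,\infty)$ and then transfer it to $f_{\hat\sigma}$ by a $C^1$-perturbation on a compact piece, patching with (\ref{GradBound}) on the tail.

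The key observation is that your second-derivative argument for $g$ works verbatim for $f_{\hat\sigma}$: at any critical point $r_*$ of $f_{\hat\sigma}$ with $f_{\hat\sigma}(r_*)>0$, equation~(\ref{r_graph}) gives $f_{\hat\sigma}''(r_*)=-f_{\hat\sigma}(r_*)/2<0$, and the same trapping argument forbids $f_{\hat\sigma}'$ from returning to zero while $f_{\hat\sigma}>0$. Combined with $f_{\hat\sigma}(r)\sim \hat\sigma r\to\infty$, this rules out critical points on $(m_{\hat\sigma},\infty)$ directly, without passing through $g$ or any perturbation. This is the content of the paper's terse assertion, and it bypasses both your $C^1$-perturbation step and your separate tail argument via (\ref{GradBound}). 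Your route has the pedagogical advantage of making the mechanism explicit at the linear level; the paper's (implicit) route is shorter since it argues once, nonlinearly, for each fixed small $\hat\sigma$. A minor wrinkle in your writeup: the opening sentence ``$u_\sigma'>0$ on $[0,\infty)$ is equivalent to $f_{\hat\sigma}'>0$ on $[u_\sigma(0),\infty)$'' presupposes that $(0,u_\sigma(0))$ lies in the domain of the $r$-graph, which is part of what is being proved; your actual argument in the final paragraph avoids this circularity, but the framing could be tightened.
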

\begin{proof}
As before, take $\hat{\sigma} = 1 / \sigma $. Then the graphs $f_{\hat{\sigma}}$ are defined on the maximal interval $(r_{\hat{\sigma}}, \infty)$ (that is, $\lim_{ r \rightarrow r_{\hat{\sigma}}^+} f'_{\hat{\sigma}}(r) \rightarrow \infty$). Note that $r_{\hat{\sigma}} \rightarrow 0 $ as $\hat{\sigma} \rightarrow 0$, since the graphs $f_{\hat{\sigma}}$ converge uniformly to $0$ in any $C^k$ on compact subsets of $(0,  \infty)$.

Now, let $r_0$ be a point such that $\frac{\partial f}{\partial\sigma}(r_0) = g(r_0) < 0$. Then, choosing $\hat{\sigma}$ sufficiently small so that $r_{\hat{\sigma}} < r_0$, we get that 
\begin{equation}
f_{\hat{\sigma}}(r_0) = g(r) \hat{\sigma} + O(\hat{\sigma}^2) < 0, 
\end{equation}
for $\hat{\sigma}$ small enough. Since each $f_{\hat{\sigma}}$ is eventually positive, we see that there is a largest point $m_{\hat{\sigma}}$ such that $f_{\hat{\sigma}}(m_{\hat{\sigma}}) = 0$. Thus $f'_{\hat{\sigma}}(m_{\hat{\sigma}})>0$. Then the graph of $f_{\hat{\sigma}\mid[m_{\hat{\sigma}}, \infty]}$ is also graphical over the $x$-axis, and defines the solutions $u_\sigma$ for $\hat{\sigma} = 1 / \sigma$. Thus, for $\sigma$ sufficiently large, the function $u_\sigma$ is increasing.
\end{proof}

As corollaries to the above, we now obtain the properties (i) and (iv) listed in Theorem \ref{ThmConicalEnds}.

\begin{proof}[Proof of Theorem \ref{ThmConicalEnds}(iv)]
Firstly we prove the second part of Theorem \ref{ThmConicalEnds}(iv), namely that the functions $u_\sigma: [0,\infty)\to\Reals^+$ are strictly increasing for any $\sigma>0$.

By Lemma \ref{positive_slope_almost_there} this is true for large enough $\sigma>0$. Assume there exists a $\sigma>0$, and hence a largest $\sigma_0>0$, such that this is not true. Then there is a point $x_0>0$ such that $u_{\sigma_0}'(x_0)=0$ and since $\sigma_0$ is the largest such, then by continuity of the solution in $\sigma$, we must have $u_{\sigma_0}(x_0)=\sqrt{2(n-1)}$ unless $x_0=0$ (since else by (\ref{SSODE}) there would be a point $x_0'\neq x_0$ such that $u_\sigma'(x_0')<0$ violating the maximality). Thus $u_{\sigma_0}$ is in that case is the cylinder, a contradiction. Since in the other case $u_{\sigma_0}'(0)=0$, we get by reflection a smooth, entire graphical surface of revolution with $H\geq0$ and thus by \cite{Hu1} we get that $u_{\sigma_0}$ is the cylinder, again a contradiction. Thus Lemma \ref{positive_slope_almost_there} extends to all $\sigma>0$.

As a corollary, we get the convexity in Theorem \ref{ThmConicalEnds}(iv), i.e. that $u_\sigma$ is strictly convex on $[0, \infty)$ for $\sigma > 0$. Namely, differentiating (\ref{MagicFormulaLong}) twice, we obtain
\begin{eqnarray*}
\frac{u_\sigma''}{1+(u_\sigma')^2}=(n-1)\bigg[\frac{1}{u_\sigma(x)}-\int_x^\infty\frac{\frac{s}{2}(1+(u_\sigma'(s))^2)}{u(s)}e^{-\int_x^s\frac{z}{2}(1+(u_\sigma'(z))^2)dz}ds\bigg],
\end{eqnarray*}
and hence $u_\sigma''>0$ on $[0, \infty)$, since $u_\sigma(s) > u_\sigma(x)$ for $s > x$. 
\end{proof}

We also get the second property in Theorem \ref{ThmConicalEnds}(i).
\begin{proof}[Proof of Theorem \ref{ThmConicalEnds}(i)]
Using the integral identity in Lemma \ref{MagicLemma} for $u_\sigma$ gives the following sharp bound on the value of $u(0)$, using l'H\^{o}pital's rule:
\begin{align*}
u(0)\leq\frac{2(n-1)}{u(0)}\int_0^\infty
\frac{s}{2}(1+(u'(s))^2)e^{-\int_0^s \frac{z}{2}(1+(u'(z))^2)dz}ds\leq\frac{2(n-1)}{u(0)},
\end{align*}
with sharp inequality unless $u\equiv u(0)$, so that
\[
u(0)\leq \sqrt{2(n-1)},
\]
with equality if and only if $u$ is the cylinder solution. 
\end{proof}

\section{Classification of self-shrinkers with rotational symmetry} \label{classification_section}

In this section we prove Theorem \ref{ThmUni}, which we restate here for the convenience of the reader in the context of geodesics in the upper half plane $(H^+, g_{\text{Ang}})$.

\begin{theorem} \label{classification_theorem}
Let $\gamma$ be a complete embedded geodesic for the metric $g_{\text{Ang}}$ in the upper half plane $H^+$. Then the following statement hold.
\begin{itemize}
\item[(1)] If $\gamma$ is closed, it is a curve that intersects the $r$-axis exactly twice.
\item[(2)] If $\gamma$ is not closed, it is either the $r$-axis, the line $r = \sqrt{2(n - 1)}$, or the sphere $x^2 + r^2 = 2n$.
\end{itemize}
\end{theorem}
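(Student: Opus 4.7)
My strategy splits into classifying non-closed complete embedded geodesics using the non-compact end analysis of Section~2, and bounding the axis-crossings of closed embedded geodesics via the translation/maximum-principle mechanism described in the introduction, which replaces the method of moving planes (translations in $x$ not being isometries of $g_{\text{Ang}}$).

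For case~(2), let $\gamma$ be a non-closed complete embedded geodesic. Each of its two ends either escapes to infinity in $H^+$ or accumulates at the degenerate boundary $\{r=0\}$. An end escaping to infinity must, past some point, be graphical over either the $x$-axis or the $r$-axis. In the $x$-axis-graphical case, Theorem~\ref{ThmConicalEnds} forces the end to be either the cylinder $r\equiv\sqrt{2(n-1)}$ or a trumpet $u_\sigma$; property (v) of the theorem rules out the latter since the maximal geodesic containing a trumpet is not embedded. The parallel analysis for $S_\sigma$ in the $r$-axis-graphical case leaves the vertical line $x=\mathrm{const}$ as the only embedded option. An end meeting $\{r=0\}$ must do so orthogonally, by smooth-capping of the rotated hypersurface. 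Combining the two ends via ODE uniqueness and the $x\mapsto -x$ reflection symmetry of $g_{\text{Ang}}$, the only complete embedded non-closed possibilities are the $r$-axis $\{x=0\}$, the horizontal line $\{r=\sqrt{2(n-1)}\}$, and the semicircle $\{x^2+r^2=2n\}$, which correspond respectively to the hyperplane, the cylinder, and the round sphere.

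For case~(1), let $\gamma\subset H^+$ be a simple closed embedded geodesic, and let $N$ denote the number of transversal intersections of $\gamma$ with $\{x=0\}$. By the Jordan curve theorem $N$ is even, so it suffices to rule out $N=0$ and $N\geq 4$. For $N=0$, assume $\gamma\subset\{x>0\}$ (the other side is symmetric); the translate $\gamma_t=\gamma-(t,0)$ is disjoint from $\gamma$ for $t$ larger than the $x$-diameter of $\gamma$, and one decreases $t$ to the largest value $t_\ast>0$ at which $\gamma_{t_\ast}$ first touches $\gamma$, which by continuity must be a tangential contact. Because the right-hand side of the angular ODE in \eqref{ode_system} depends monotonically on $x$ (equivalently, $\partial_x$ of the conformal factor $r^{2(n-1)}e^{-(x^2+r^2)/2}$ has a definite sign on $\{x>0\}$), the translated curve $\gamma_{t_\ast}$ is a strict super-solution (on the appropriate side) of the geodesic equation satisfied by $\gamma$, contradicting the strong maximum principle at an interior tangency. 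The case $N\geq 4$ is handled by the same move applied to a pair of loops cut off by $\gamma\cap\{x=0\}$: translate one loop across the axis toward its neighbor until first tangency, then invoke the same strict sub-/super-solution contradiction with embeddedness. Hence $N=2$.

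The hardest step is the rigorous realization of the translation/maximum-principle comparison. Since $x$-translation is not an isometry of $g_{\text{Ang}}$, one must write both $\gamma$ and $\gamma_{t_\ast}$ as graphs of the same variable near the contact point (or compare them through the scalar $\theta$-equation), verify that the resulting inequality is strict and preserved under linearization, and apply the strong maximum principle to an elliptic equation whose coefficients depend on both curves simultaneously. A supplementary Hopf-lemma argument is needed when the first tangency occurs on $\{x=0\}$ itself, where both curves become reflection-symmetric and the ordinary strict inequality degenerates. Once this geometric maximum-principle apparatus (the content of the paper's Proposition~\ref{embedded_characterization}) is in place, the combinatorial steps of both~(1) and~(2) are straightforward.
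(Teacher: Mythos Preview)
Your plan for case~(2) contains a circularity: you invoke property~(v) of Theorem~\ref{ThmConicalEnds} to rule out trumpet ends, but in the paper property~(v) is only \emph{proved} as a corollary of Theorem~\ref{classification_theorem} itself (see the Corollary immediately following the statement of Theorem~\ref{classification_theorem}). So it cannot be used as input here. The paper's actual route for the non-closed case is different and sidesteps this: Proposition~\ref{embedded_characterization}(3) shows that a non-closed embedded geodesic (other than the $r$-axis) has \emph{no interior vertical points}, hence is globally the graph of some $u(x)$ over an interval; Lemma~\ref{BlowUpLemma} then forces $xu'-u<0$ everywhere, which is exactly positive mean curvature of the rotated hypersurface; finally Huisken's classification of mean-convex self-shrinkers \cite{Hu1} is invoked as a black box to conclude sphere or cylinder. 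Your end-by-end analysis never establishes global graphicality and so cannot reach this step.

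For case~(1) your instincts point the right way but the setup is imprecise. The case $N=0$ needs no translation argument: a closed curve in $\{x>0\}$ would have a positive minimum of $x(t)$, which Lemma~\ref{higgins} forbids outright. More substantively, the paper does not count $r$-axis crossings directly but rather \emph{vertical points}, and the translation in Proposition~\ref{embedded_characterization} is not of the whole closed curve against a copy of itself, but of a carefully chosen arc $\gamma_1$ (bounded by horizontal points, containing one vertical point) against the complementary arc $\gamma_2$. This choice guarantees that the first contact is interior and non-horizontal, so that at the contact point $\hat p$ one has $\dot\theta_{\gamma_2}(\hat p)-\dot\theta_{\hat\gamma_1}(\hat p)=\tfrac{c}{2}\sin\theta$ with a definite sign, yielding the contradiction. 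Your version---translating the full curve, or ``a pair of loops''---does not control where the tangency lands, and your monotonicity claim ``on $\{x>0\}$'' is insufficient since the translated curve may well have left that half-plane by the time of first contact.
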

\begin{corollary}[= Theorem \ref{ThmConicalEnds}(v)]
In particular this implies the remaining part (v) in Theorem \ref{ThmConicalEnds}, that the asymptotically conical ends are not parts of complete, embedded self-shrinkers.
\end{corollary}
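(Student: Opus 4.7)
The plan is to argue by contradiction using Theorem \ref{classification_theorem} as an exhaustive list of possibilities for a maximal embedded geodesic. Suppose for contradiction that $\gamma_\sigma$ is embedded (equivalently, that the asymptotically conical end $\Sigma_\sigma$ lies inside some complete embedded self-shrinker of revolution). Since $\gamma_\sigma$ is by definition the maximal geodesic containing the graph of $u_\sigma$, it is then a complete embedded geodesic in $(H^+,g_{\text{Ang}})$ in the sense required by Theorem \ref{classification_theorem}; ``complete'' must be read here as ``maximally extended'', as signalled by the inclusion of the semicircle $\{x^2+r^2=2n\}$ (which reaches the axis $\{r=0\}$) in the classification.

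Next I would invoke Theorem \ref{ThmConicalEnds}(ii) to record that along its asymptotic end $\gamma_\sigma$ is arbitrarily close to the straight ray $r_\sigma(x)=\sigma x$ with $\sigma>0$, so that both coordinates $x$ and $r$ tend to $+\infty$ along $\gamma_\sigma$. Running through the four classes permitted by Theorem \ref{classification_theorem} now yields an immediate contradiction: a closed curve meeting the $r$-axis twice is bounded; the $r$-axis itself has $x\equiv 0$; the line $\{r=\sqrt{2(n-1)}\}$ has $r$ constant; and the sphere $\{x^2+r^2=2n\}$ is bounded. Hence $\gamma_\sigma$ cannot be embedded, which is exactly property (v) of Theorem \ref{ThmConicalEnds} and, equivalently, the geometric statement of the corollary.

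I do not anticipate any substantive obstacle: once Theorems \ref{ThmConicalEnds}(ii) and \ref{classification_theorem} are granted, the corollary reduces to a one-line incompatibility check between the asymptotic slope $\sigma>0$ and each of the four classified shapes. The only mildly delicate step is the terminological verification noted above, confirming that the maximally-extended $\gamma_\sigma$ satisfies the ``complete embedded'' hypothesis of Theorem \ref{classification_theorem}; one could alternatively appeal directly to Theorem \ref{ThmUni} and observe that none of the four complete embedded self-shrinkers of revolution listed there admits a non-compact end with positive asymptotic slope.
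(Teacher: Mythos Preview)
Your proposal is correct and is exactly the argument the paper has in mind: the corollary is stated without a separate proof, as an immediate consequence of the classification in Theorem~\ref{classification_theorem}, and your case-by-case check (using the unboundedness of $x$ and $r$ along the graph of $u_\sigma$ from Theorem~\ref{ThmConicalEnds}(ii)) is precisely the obvious verification that none of the four listed geodesics can contain such an end. Your observation about the meaning of ``complete'' is also apt and consistent with the paper's usage.
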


To facilitate the discussion, we  say that a point in a smooth curve is  ``vertical'' if the tangent line at that point is parallel to $ e_r$, and ``horizontal'' if parallel to $e_x$, where $\{e_x, e_r\}$ is the unit basis corresponding to the Euclidean coordinates $(x, r)$ on $H^+$. By the first and second quadrants, we as usual mean the sets $\{(x, r)\mid x, r > 0  \}$ and $\{ (x, r)\mid x < 0, r > 0\}$ contained in $H^+$, respectively. For a smooth  curve $\gamma(t) = (x(t), r(t))$ parametrized by Euclidean arc length, we denote 
 \begin{equation} \notag
 \theta (t) = \arccos\dot{x} (t) = \arctan (\dot{r}(t) /\dot{x}(t)),
 \end{equation}
  and we say that $\gamma(t)$ is a solution to (\ref{ode_system}), if the triple $(x(t), r(t), \theta(t))$ solves (\ref{ode_system}). We occasionally refer to such curves $\gamma$ as ``geodesics'' for the metric $g_{\text{Ang}}$ in $H^+$, although this is a slight abuse of terminology as solutions to (\ref{ode_system}) are parametrized by Euclidean arc length, not arc length with respect to $g_{\text{Ang}}$. We will  make  frequent use of the following elementary observation.
 
  \begin{lemma} \label{higgins}
 Let  $\gamma(t) = (x(t), r(t))$ be a solution  to (\ref{ode_system}). Then the functions $x(t)$ and $r(t) - \sqrt{2(n - 1)}$ have neither positive minima nor negative maxima, and in particular these functions have different signs at successive critical points.
 \end{lemma}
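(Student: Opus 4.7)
The plan is to differentiate the system (\ref{ode_system}) once more and extract strict second-order information at any critical point of $x$ or of $r$. The point is that at such a critical point one of the two trigonometric terms in $\dot{\theta}$ drops out, leaving a clean expression for $\ddot{x}$ or $\ddot{r}$ whose sign is determined by the value of the function itself.

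At a critical point $t_0$ of $x$ we have $\dot{x}(t_0) = \cos\theta(t_0) = 0$, so $\sin^2\theta(t_0) = 1$ and the $\cos\theta$ term in $\dot{\theta}$ vanishes. Differentiating $\dot{x} = \cos\theta$ and substituting (\ref{ode_system}) then yields
\[
\ddot{x}(t_0) \;=\; -\sin\theta(t_0)\,\dot{\theta}(t_0) \;=\; -\sin^2\theta(t_0)\cdot\tfrac{x(t_0)}{2} \;=\; -\tfrac{x(t_0)}{2}.
\]
Hence $\ddot{x}(t_0)$ has the opposite sign to $x(t_0)$: any critical point with $x(t_0)>0$ is a strict local maximum, and any with $x(t_0)<0$ a strict local minimum. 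This rules out positive minima and negative maxima for $x$.

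For $r-\sqrt{2(n-1)}$ the same approach applies. At a critical point $t_0$ of $r$, $\sin\theta(t_0)=0$ and $\cos^2\theta(t_0)=1$, so the $\sin\theta$ term in $\dot{\theta}$ drops out and
\[
\ddot{r}(t_0) \;=\; \cos\theta(t_0)\,\dot{\theta}(t_0) \;=\; \Bigl(\tfrac{n-1}{r(t_0)}-\tfrac{r(t_0)}{2}\Bigr)\cos^2\theta(t_0) \;=\; \frac{2(n-1)-r(t_0)^2}{2r(t_0)},
\]
whose sign is opposite to that of $r(t_0)-\sqrt{2(n-1)}$. So once again, critical points above $\sqrt{2(n-1)}$ are strict local maxima and those below are strict local minima, giving the ``no positive minima, no negative maxima'' conclusion for $r-\sqrt{2(n-1)}$.

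For the final assertion about successive critical points, note that the above computations already show that any critical point with nonzero critical value is a strict, hence isolated, local extremum. Between two successive such critical points $t_1<t_2$ the first derivative has no zero, so the function is strictly monotonic on $(t_1,t_2)$; this forces the two extrema to be of opposite type (one local maximum, one local minimum). Combining with the first two parts, where local maxima occur only where the function is positive and local minima only where it is negative, yields opposite signs at $t_1$ and $t_2$. I do not anticipate a serious obstacle in this argument: the entire lemma rests on the cancellation in the $\dot{\theta}$ equation at critical points, and the only edge case to mention is when the critical value is exactly zero, which falls outside both the ``positive minimum / negative maximum'' dichotomy and the sign-alternation conclusion vacuously.
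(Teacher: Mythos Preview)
The paper does not actually give a proof of this lemma: it is introduced as an ``elementary observation'' and left to the reader. Your argument is the natural one and is correct. The two second-derivative computations are exactly what one expects, and the deduction about successive critical points follows cleanly once the strictness of the extrema is established.

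One small point worth tightening: your treatment of the degenerate case (critical value equal to zero) is slightly informal. In fact this case can be disposed of completely. If $\dot{x}(t_0)=\cos\theta(t_0)=0$ and $x(t_0)=0$, then $(x,r,\theta)(t_0)=(0,r_0,\pm\pi/2)$, and one checks directly that $x\equiv 0$, $\theta\equiv\pm\pi/2$, $r(t)=r_0\pm t$ solves (\ref{ode_system}); by uniqueness of solutions, $\gamma$ is the $r$-axis. Likewise, $\dot{r}(t_0)=0$ together with $r(t_0)=\sqrt{2(n-1)}$ forces $\gamma$ to be the line $r\equiv\sqrt{2(n-1)}$. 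In either situation the relevant function is identically zero and the lemma is trivial, so the sign-alternation statement is genuinely vacuous there rather than merely ``not violated.'' With this remark the edge case is handled rigorously, and nothing else in your write-up needs to change.
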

 
 \begin{remark}
 We remind the reader that the reflection $(x,r)\mapsto(-x,r)$ is a symmetry of the equation, a fact that will be used often in the below.
 \end{remark}
 
 The following lemma is of fundamental importance for our proof.  Included in the statement of (2) is the (geometrically unsurprising) fact that geodesics for the metric $g_{\text{Ang}}$ that leave the upper half plane through the $x$-axis do so orthogonally. 
 
\begin{lemma} \label{maximal_graphs}
Let $\gamma: (a, b) \rightarrow H^+$ be a solution to (\ref{ode_system}), maximally extended as a graph over the $x$-axis. Then
\begin{itemize}
\item[(1)] There is $t \in (a, b)$ such that $x(t) = 0$.
\item[(2)] Assuming the existence and finiteness of the limit 
\begin{equation} \notag
x_b:=\lim_{t \rightarrow b^-} x(t)<\infty,
\end{equation}
the curve $\gamma$ extends smoothly to $(a, b]$, with $\gamma(b)$ a vertical point. If $r(b) = 0$, the curvature of $\gamma$ at $\gamma(b)$ (signed w.r.t. the orientation out of the half-plane) is $- \frac{x_b}{2 n}$.
\item[(3)] There is at least one horizontal point in $\gamma$. 
\end{itemize}
\end{lemma}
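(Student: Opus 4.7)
The plan is to prove (1)--(3) in sequence, with the main difficulty in (3).

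For (1), I would argue by contradiction using Lemma \ref{higgins} together with the reflection symmetry of the ODE. Assume $x(t)\neq 0$ on $(a,b)$. Since $\gamma$ is maximally graphical, $x(t)$ is strictly monotone; WLOG $x$ is increasing and, after the reflection $(x,r)\mapsto(-x,r)$, also $x>0$ throughout. Let $x_a:=\lim_{t\to a^+}x(t)\geq 0$. If $x_a>0$ with $a$ finite, applying (2) at the left endpoint extends $\gamma$ smoothly to a vertical point $\gamma(a)=(x_a,r_a)$, which is a positive minimum of $x$ on the extended geodesic, contradicting Lemma \ref{higgins}. If $x_a>0$ with $a=-\infty$, then $\cos\theta=\dot x\to 0$ forces $\theta\to\pm\pi/2$ asymptotically; the ODE then gives $\dot\theta\to\pm x_a/2\neq 0$, contradicting the boundedness of $\theta$. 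If $x_a=0$, part (2) again extends $\gamma$ to $\gamma(a)=(0,r_a)$ with vertical tangent, but both the ODE and this initial data are invariant under $(x,r)\mapsto(-x,r)$, so the geodesic continues symmetrically into $\{x<0\}$ with $x$ still monotone, violating the maximality of $(a,b)$.

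For (2), monotonicity of $x$ together with $x_b<\infty$ force $\cos\theta\to 0$ along a sequence, and the sign of $\dot\theta$ near $\theta=\pm\pi/2$ dictated by the $\theta$-equation promotes this to a genuine one-sided limit $\theta(b^-)\in\{-\pi/2,\pi/2\}$; standard ODE theory then extends $\gamma$ smoothly to $(a,b]$ when $r$ stays bounded away from $0$. For the curvature claim when $r(b)=0$, I would set $s=b-t$ and $\theta=-\pi/2+\alpha$ (the sign forced by $\dot r\to -1$ since $r$ decreases to $0$), using $r(t)\sim s$ from $\dot r\to -1$. The $\theta$-equation linearizes near $s=0$ to
\[
\frac{d\alpha}{ds} \;=\; \frac{x_b}{2}\,-\,\frac{n-1}{s}\,\alpha\,+\,O(s),
\]
whose bounded solution is $\alpha(s)=\frac{x_b}{2n}\,s+O(s^2)$, so $\dot\theta=-d\alpha/ds\to -x_b/(2n)$, the signed curvature at $\gamma(b)$.

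For (3), the decisive computation is a second-derivative test for $\theta$: differentiating the $\theta$-equation and substituting $\dot x=\cos\theta$, $\dot r=\sin\theta$ gives, at any critical point of $\theta$,
\[
\ddot\theta\,\big|_{\dot\theta=0} \;=\; -\,(n-1)\,\frac{\sin\theta\cos\theta}{r^2}.
\]
I would assume for contradiction that $\gamma$ has no horizontal point, so $\sin\theta$ has constant sign; the combined symmetry $(x,r,t)\mapsto(-x,r,-t)$ of the ODE (which sends $\theta\mapsto-\theta$) then allows us to assume $\theta\in(0,\pi/2)$ on $(a,b)$. The identity above is then strictly negative at every interior critical point of $\theta$, so $\theta$ admits no interior local minimum; hence $\theta$ is monotone or single-peaked on $(a,b)$. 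I would then analyze the endpoint limits: at any finite end, part (2) gives $\theta\to\pm\pi/2$, refined to $\pi/2$ by $\theta>0$; at an infinite end, the integral-identity analysis underlying Theorem \ref{ThmConicalEnds}, combined with the reflection $(x,r)\mapsto(-x,r)$ for $x\to -\infty$, forces $\theta\to\pm\arctan\sigma$, which rules out a left-infinite end under $\theta>0$ for any non-cylinder solution. The remaining configurations are excluded by applying Lemma \ref{higgins} to the full extended geodesic past the terminal verticals (where successive critical points of $x$ must alternate in sign), producing a contradiction and hence a horizontal point in $\gamma$ by intermediate value. The main obstacle is this final case analysis, which requires carefully combining the local ODE identity with the global no-positive-minimum constraint from Lemma \ref{higgins}.
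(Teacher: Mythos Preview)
Your proof has a genuine gap in part (2), which is actually the technical heart of the lemma. You write that ``standard ODE theory then extends $\gamma$ smoothly to $(a,b]$ when $r$ stays bounded away from $0$,'' but you never establish that $r(t)$ has a limit as $t\to b^-$, let alone that this limit is finite. The hypothesis $x_b<\infty$ says nothing directly about $r$: a priori $r(t)$ could oscillate (with successive horizontal points alternating across $r=\sqrt{2(n-1)}$ by Lemma~\ref{higgins}), or could diverge to $+\infty$. Ruling these out is the bulk of the paper's argument. It first uses an integrated form of (\ref{SSODE}) together with Lemma~\ref{higgins} to show that either $\lim_{t\to b} r(t)$ exists or $r$ oscillates all the way between $0$ and $\infty$, and excludes the latter via the linearized equation at the $r$-axis. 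To bound $r_b$ away from $\infty$ when $x_b>0$, it introduces the quantity $\Lambda=-\langle\gamma,\nu\rangle$, whose differential inequality forces $\theta$ to wind unboundedly if $|\gamma|\to\infty$, contradicting graphicality. Your sentence ``the sign of $\dot\theta$ near $\theta=\pm\pi/2$ \ldots promotes this to a genuine one-sided limit'' does not supply any of this, and in particular never touches the $r$-coordinate.

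There is also a genuine error in your argument for (1). At a vertical left endpoint $\gamma(a)=(x_a,r_a)$ with $x_a>0$, the $\theta$-equation gives $\dot\theta(a)=\pm x_a/2$ according as $\theta(a)=\pm\pi/2$, and in either case $\ddot x(a)=-\sin\theta(a)\,\dot\theta(a)=-x_a/2<0$: the point is a local \emph{maximum} of $x$, not a minimum, so Lemma~\ref{higgins} does not apply as you claim. The correct contradiction is that a strict local maximum of $x$ at $t=a$ is incompatible with $\dot x=\cos\theta>0$ on $(a,b)$ (equivalently, the geodesic would continue as a graph for $t<a$, violating maximality of the interval). Your $x_a=0$ sub-case is likewise off: by the symmetry $(x,r,\theta)\mapsto(-x,r,\pi-\theta)$ and ODE uniqueness, the only geodesic through $(0,r_a)$ with vertical tangent is the $r$-axis itself, so a graphical arc cannot limit to such a point at all---there is no ``symmetric continuation into $\{x<0\}$ with $x$ still monotone.''

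Your approach to (3) is close in spirit to the paper's---both hinge on the identity $\ddot\theta|_{\dot\theta=0}=-(n-1)r^{-2}\sin\theta\cos\theta$---but the paper organizes the case analysis around the point $t_0$ with $x(t_0)=0$ furnished by (1), splitting on $r(t_0)\gtrless\sqrt{2(n-1)}$, which is cleaner than your endpoint dichotomy and avoids the somewhat delicate treatment of infinite ends that you leave to ``the integral-identity analysis underlying Theorem~\ref{ThmConicalEnds}.''
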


\begin{proof}
Assume the orientation of the curve is such that $\cos\theta=\dot{x}(t) > 0$ for $t \in (a, b)$. Set
\begin{equation}\label{Lambda_def}
 \Lambda(t) :  = x(t) \sin \theta (t) - r(t) \cos \theta (t) =   -   \langle\gamma(t),\nu(t)\rangle, 
\end{equation} 
where $\nu (t) = (- \sin \theta (t), \cos \theta(t))$ is the (leftward pointing w.r.t $\dot{\gamma}$) unit normal to $\gamma$. Then (\ref{ode_system}) becomes
\begin{equation}\label{ode_again}
\dot{\theta} = \frac{1}{2} \Lambda + \frac{n - 1}{r} \cos \theta,
\end{equation}
and $\Lambda$ satisfies the equation
\begin{equation} \label{Lambda_dot}
\dot{\Lambda}  = \frac{1}{2} \Lambda  \langle \gamma,  \dot{\gamma}  \rangle + \frac{n - 1}{r} \cos \theta \langle \gamma , \dot{\gamma} \rangle.
\end{equation}

We now investigate the oscillation behavior. Picking some $(x_0,u(x_0))$ on $\gamma$ and integrating by parts in (\ref{SSODE}) gives
\beq
\arctan u'\mid_{x_0}^x=xu(x)-x_0u(x_0)+\int_{x_0}^x\Big[\frac{n-1}{u(s)}-u(s)\Big]ds,
\eeq
so that if we assume a lower (resp. upper) bound on $r(t)=u(x)$, as $x\to x_b$, it leads to a uniform upper (resp. lower) bound on $u'(x)$. Therefore by the mean value theorem (recall that by Lemma \ref{higgins} successive points where $u'(x)=0$ must occur on either side of the line $r=\sqrt{2(n-1)}$), such points must either eventually stop occurring as $t\to b$, or the limit $r(t)\to\sqrt{2(n-1)}$ must hold. But if $u'(x)$ eventually has a fixed sign, then the limit $\lim_{t\to b} r(t)$ also exists.

Thus if we denote by $r^+_b  $ (resp. $r^-_b$) the $\limsup$ (resp. $\liminf$) of $r(t)$ as $t \rightarrow b$, then we have shown that either:
\begin{itemize}
\item[(i)] There is a limit: $\lim r_b = r^+_b = r_b^-$, or
\item[(ii)] Both $r^+_b = \infty$ and $r_b^- = 0$.
\end{itemize} 
But the second situation does not happen: Case (ii) implies that the straight line segment $\{(x_b, t): t > 0 \}$ satisfies $(\ref{ode_again})$, and thus we conclude $x_b=0$. But from (ii) we thus also obtained a positive solution $g(r)$ to the linearized equation at the $r$-axis (\ref{linearized_equation}), which gives a contradiction similarly to in Corollary \ref{pos_and_neg}.

Now, it is easy to see that the limit $r_b$ is finite: If $x_b \leq 0$, then assuming both $r(t) > \sqrt{2(n - 1)}$ and $\dot{r}(t) > 0$ then (\ref{ode_again}) gives that $\dot{\theta}(t) < 0 $, which immediately bounds $r_b$ away from $\infty$.

On the other hand, assuming still $r_b=+\infty$ but $x_b > 0$,  then (again by Lemma \ref{higgins}) eventually $\dot{r}(t)>0$ as $t\to b$, and hence eventually $\langle \gamma, \dot{\gamma}\rangle > 0$. There are also choices of $t_0 \in (a, b)$  arbitrarily close to $b$ with $\Lambda (t_0) > 0$, since else for some fixed $x^0<x_b$ we would have had $xu'(x)-u(x)<0$ for $x$ in an interval $(x^0, x_b)$, leading to the contradictory bound:
\beq
r_b=\lim_{x\to x_b}u(x)\leq x_b\frac{u(x^0)}{x^0}<\infty.
\eeq
Now, since $\langle \gamma(t), \dot{\gamma}(t) \rangle>0$, the property $\Lambda(t)>0$ is propagated on $t\in(t_0,b)$, by (\ref{Lambda_dot}). Dividing (\ref{Lambda_dot}) by $\Lambda$ and integrating over $(t_0, t)$ gives that
\begin{equation} \label{lambda_lower_bound}
\Lambda (t) > e^{|\gamma (t)|^2/4 - |\gamma (t_0))|^2/4} \Lambda (t_0).
\end{equation} 
However, combining (\ref{lambda_lower_bound}) with (\ref{ode_again}) and $|\gamma(t)|\to+\infty$ gives that 
\begin{equation}
\theta (t) - \theta ( t_0)  \rightarrow +\infty
\end{equation}
as $t \rightarrow b$, contradicting that $\gamma$ is graphical over the $x$-axis.

Thus the limit  $r_b$ exists and is a non-negative real number. If it is positive, then  $\gamma(t)$ remains in a relatively compact  subset of the upper half plane $H^+$ as $t \to b$.  Equation (\ref{ode_again}) then gives uniform $C^k$-bounds on $\gamma(t)$ and the desired smooth extension to a vertical endpoint, giving the conclusion $(2)$ in that case. Lemma \ref{higgins} then implies that $x_b > 0$. 

If on the other hand $r_b=0$,  then we claim that $\theta(t)$ decreases to $-\pi/2$ monotonically as $t$ increases to $b$. To see this, note first that $\theta(t)$ cannot remain bounded away from $-\pi/2$ as $t\to b$,  since otherwise  (\ref{ode_again}) and $r(t)\to 0$ give that
\beq
\dot{\theta}(t) \geq \frac{\delta}{r(t)} - 2x_b,\quad\textrm{where}\quad \delta:=\underset{t\nearrow b}{\inf}(\cos\theta(t)),
\eeq
for $t$ close enough to $b$. This, after using that $\dot{r}(t) \geq -1$ and integrating, gives
 \begin{equation} \notag
 \theta(t_2) - \theta (t_1) \geq \log \left(\delta\frac{r(t_1)}{r(t_2)}\right) - 2x_b(t_2 - t_1), 
 \end{equation} 
for any $t_2 > t_1$, and implicitly bounds $r(t)$ away from zero as $t\to b$, a contradiction.

In particular there must be points arbitrarily close to $b$ s.t. $\dot{\theta}>0$. Now, $r(t)\to 0$ and Lemma \ref{higgins} imply that $\dot{r}(t) < 0$ for $b - t$ sufficiently small, and differentiating (\ref{ode_again}) gives that 
\begin{equation} \label{theta_dotdot}
\ddot{\theta}(t) = - \frac{n - 1}{r^2}\dot{r}(t) \cos \theta (t),
\end{equation}
 at times $t$ for which $\dot{\theta} (t) = 0$, if there were any. Thus it follows that in fact $\dot{\theta}(t) < 0$ for all $b - t$ sufficiently small, and we have proved that $\theta(t) \searrow -\pi /2$ as $t\nearrow b$. 

Finally, applying l'H\^{o}pital's rule to (\ref{ode_again}) gives that 

\begin{equation} \notag
\lim_{t \rightarrow b^-} \dot{\theta} (t) = - \frac{x_b}{ 2 n}. 
\end{equation}
so that $\gamma(t)$  extends with two derivatives to $(a, b]$ with $x_b > 0$. The higher regularity then follows immediately, giving (2) also in this case.

In all cases, we see that $x_b > 0$. By symmetry, we get that also $x_a < 0$, which gives claim $(1)$.

To see (3), suppose first that $(a, b)$ is a bounded interval.  Note that (3) is clear if $\lim\theta(t)\in\{\pm\pi/2\}$ is different at the two endpoints. We may thus assume, with our chosen orientation, that $\lim_{t \rightarrow a+} \theta(t) = \lim_{t \rightarrow b-} \theta(t) = \pi/ 2$, and we argue by contradiction.

By (1), there is a $t_0 \in (a, b)$ so that $x(t_0) = 0$. If $r(t_0) > \sqrt{2(n -1)}$, then  (\ref{ode_again}) gives that $\dot{\theta}(t_0) < 0$. Differentiating (\ref{ode_again}) and evaluating at a point $t$ for which $\dot{\theta} (t) = 0$ gives that 
\begin{equation} \label{freakout}
\ddot{\theta} (t) = - \frac{n - 1}{r^2(t)} \dot{r}(t) \cos \theta (t) < 0, 
\end{equation}
so that $\theta(t)$ is bounded away from $\pi/2$ as $t \rightarrow b$,  a contradiction. If $r(t_0) < \sqrt{2(n - 1)}$, then (\ref{ode_again})  gives  $\theta(t_0) > 0$  and we apply a similar argument as before to contradict the assumption $\lim_{t \rightarrow a+} \theta(t) = \pi/2$. In the case of equality $r(t_0) = \sqrt{2(n - 1)}$, we have $\dot{\theta}(t_0) = 0$, and we refer to (\ref{freakout}) to obtain that $\dot{\theta}(t) < 0$ for $t > t_0$, from which as before we obtain a contradiction. 

If $b = \infty$ and $a$ is finite, then Theorem \ref{ThmConicalEnds} gives that $\gamma$ contains the graph of a function $u_\sigma$ for some $\sigma >  0$. In particular, we have that, with $t_0$ as before, $r(t_0) < \sqrt{2(n - 1)}$, and we argue as before that $\dot{r}(t) = 0$ for some $t \in (a, t_0)$.

Finally, if $(a, b) = \mathbb{R}$, then Theorem \ref{ThmConicalEnds} gives that $\gamma$ coincides with the line $r = \sqrt{2(n - 1)}$ for which (3) clearly holds.
 \end{proof}

\begin{proposition} \label{embedded_characterization}
Let $\gamma$ be a  complete solution to (\ref{ode_system}), such that one of the following statements hold:
\begin{itemize}
\item[(1)] $\gamma$  contains 7 vertical points.  
\item[(2)] $\gamma$ is closed and contains two vertical points in the first quadrant.
\item [(3)] $\gamma$ is not closed and contains one interior vertical point.
\end{itemize}
Then $\gamma$ is not embedded.
\end{proposition}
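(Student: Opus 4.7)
The plan is to run a moving-planes argument in which horizontal translation plays the role of reflection, exploiting the ``better than symmetric'' phenomenon highlighted in the introduction. While $(x,r)\mapsto(-x,r)$ is a genuine symmetry of (\ref{ode_system}), translation $(x,r)\mapsto(x+c,r)$ is not: if $(x(t),r(t),\theta(t))$ is a solution, then the translated triple solves (\ref{ode_system}) with a correction term $\tfrac{c}{2}\sin\theta$ appended to the $\dot\theta$-equation, and this correction has a definite sign wherever the arc has a fixed vertical monotonicity. Consequently, if two subarcs of $\gamma$ --- one of which has been translated or reflected across a vertical line $\ell=\{x=a\}$ with $a\neq 0$ --- touch tangentially at an interior point on the same side of $\ell$, the two governing ODEs differ by a term of definite sign and the arcs must cross transversally nearby. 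This one-sided strict maximum principle is the engine of the proof; my task is to locate, under each hypothesis, a configuration in which it can be applied.

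For case (3), let $p_0$ be the interior vertical point and set $\ell=\{x=x(p_0)\}$. Reflect the portion of $\gamma$ on one side of $\ell$ to obtain a curve $\widetilde\gamma$ which is $C^1$-tangent to $\gamma$ at $p_0$. Perform a sweep by sliding $\ell$ in from $\{x=+\infty\}$: for $a$ large the reflected piece is disjoint from the rest of $\gamma$, and decreasing $a$ to the first value at which a new contact occurs yields contact that must be either interior (ruled out by the strict maximum principle) or asymptotic. Asymptotic contact is excluded by Lemma \ref{maximal_graphs}: each non-compact end of $\gamma$ is either a vertical endpoint on the $x$-axis reached orthogonally, or, by Theorem \ref{ThmConicalEnds}, an asymptotically conical end whose slope is pinned by Lemma \ref{SupEstimates}; neither admits a non-trivial limiting tangency with a reflected copy. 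The remaining alternative --- tangential contact at a second vertical point of $\gamma$ on $\ell$ --- is itself a self-intersection of $\gamma$.

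For case (2), the closed hypothesis together with two vertical points in the first quadrant yields two arcs of $\gamma$ that both graph over overlapping intervals of the $x$-axis; horizontally translating the outer arc toward the inner produces a tangential contact, and the strict comparison of the first paragraph forces a crossing of the original curves. For case (1), a pigeonhole among the seven vertical points combined with the alternation from Lemma \ref{higgins} (successive critical points of $x(t)$ lie on opposite sides of the $r$-axis, and those of $r(t)-\sqrt{2(n-1)}$ on opposite sides of that line) extracts three vertical points in a single quadrant on a single side of $r=\sqrt{2(n-1)}$, producing the same graphical-overlap configuration handled in case (2). The hardest step will be making the sweep rigorous across every configuration --- in particular verifying that the first-contact moment is finite and neither slips off to infinity nor to the $x$-axis, and that the contact cannot occur tangentially on $\ell$ itself without a Hopf-type boundary version of the strict translation comparison kicking in --- precisely where Lemma \ref{maximal_graphs} and Theorem \ref{ThmConicalEnds} do the heavy lifting.
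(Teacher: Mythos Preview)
Your core mechanism---translate one subarc of $\gamma$ toward another and use that the translated arc solves (\ref{ode_system}) with a correction $\tfrac{c}{2}\sin\theta$ added to $\dot\theta$, so that a first tangential contact is impossible---is exactly the engine the paper uses. But your implementation has genuine gaps, and you have missed the structural device that makes the argument close.

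In case~(1) your pigeonhole step is wrong. Lemma~\ref{higgins} forces successive \emph{vertical} points to alternate in the sign of $x$, and successive \emph{horizontal} points to alternate across $r=\sqrt{2(n-1)}$; it says nothing about the $r$-coordinate of vertical points. From seven vertical points you get four in one half-plane, but you cannot extract three of those on one side of $r=\sqrt{2(n-1)}$ (a $2$--$2$ split is perfectly possible), so the reduction to the case~(2) configuration fails. The paper does not pigeonhole on vertical points at all. Instead it invokes Lemma~\ref{maximal_graphs}(3) to place a \emph{horizontal} point between every two consecutive vertical points, and uses these horizontal points as the endpoints of the subarcs $\gamma_1,\gamma_2$ to be compared. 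This is the key ingredient your sketch lacks: horizontal endpoints guarantee (i) that each subarc, running from a horizontal point across a vertical point to the next horizontal point, crosses $r=\sqrt{2(n-1)}$ (so a first contact occurs at finite translation), and (ii) that the outward tangents at the shared endpoint are $\pm e_x$, so first contact cannot occur there---nor at any horizontal point, since local convexity there is preserved under translation.

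In case~(3) you switch to Alexandrov-style reflection across $\{x=a\}$, whereas the paper uses pure translation throughout. Your boundary analysis is not correct as written: ``tangential contact at a second vertical point of $\gamma$ on $\ell$ is itself a self-intersection of $\gamma$'' is false, since two vertical points sharing an $x$-coordinate need not share an $r$-coordinate. The paper sidesteps all such boundary issues by again cutting at horizontal points, and handles non-compact ends by observing via Theorem~\ref{ThmConicalEnds} that the two ends carry \emph{distinct} asymptotic slopes $\sigma_1\neq\sigma_2$, hence are a positive distance apart; this guarantees a finite first-contact translation with interior, non-horizontal contact, where the $\tfrac{c}{2}\sin\theta$ comparison applies.
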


\begin{proof}[Proof of Proposition \ref{embedded_characterization}(1)]
 Consider  a segment of $\gamma$ containing seven consecutive vertical points, which we identify with the interval $[1, 7] \subset \mathbb{R}$ such that the vertical points correspond to integer values of the parameter. The vertical points will thus be denoted by $(x(k),r(k))$ for $k=1,\ldots,7$. Then by Lemma \ref{higgins},  after possibly reflecting $\gamma$ through the $r$-axis, we can assume that $x(k)$ is positive for $k$ odd and negative otherwise. Lemma \ref{maximal_graphs}(3) then gives  the existence of a horizontal point in each segment  $[k, k + 1]$, $k = 1, \ldots, 6$,  which we identify with the points $k+\fracsm{1}{2}$, $k =1, \ldots,6$. Lemma \ref{higgins}  implies that both the segments $[2+\fracsm{1}{2}, 3+\fracsm{1}{2}]$ and $[4+\fracsm{1}{2}, 5+\fracsm{1}{2}]$  intersect the line $r = \sqrt{2(n - 1)}$, so assume, after possibly reversing orientation  that $[2+\fracsm{1}{2}, 3+\fracsm{1}{2}]$ intersects to the left of $[4+\fracsm{1}{2}, 5+\fracsm{1}{2}]$. Take $\gamma_1$ to be the segment $[2+\fracsm{1}{2}, 3+\fracsm{1}{2}]$ and take $\gamma_2$ to be the segment $[3+\fracsm{1}{2}, 6+\fracsm{1}{2}]$. Note that on $\gamma_1$ the outward pointing unit tangent is $-e_x$ at each endpoint, while on $\gamma_2$ the outward pointing unit tangent is $e_x$. 
 
 We now translate the curve $\gamma_1$ in the positive $e_x$ direction until a point of first contact with $\gamma_2$. Note that such a point occurs,  since both segments intersect the line $r = \sqrt{2(n - 1)}$, with $\gamma_1$ intersecting to the left of $\gamma_2$, and that this point of first contact occurs away from the endpoints of both segments, and more generally does not occur at any horizontal point (since in particular the convexity near such a point is preserved under translation, it could not be a first intersection). Let $\hat{\gamma}_1 = \gamma_1 + c e_x $ denote the segment for which first point of contact occurs. Appealing to system (\ref{ode_system}) we get that
\begin{equation}
\dot{\theta}_{\gamma_2}(\hat{p}) - \dot{\theta}_{\hat{\gamma}_1}(\hat{p})= \frac{c}{2} \sin \theta 
\end{equation}
holds at the point of first contact $\hat{p}$, and where $\theta = \theta_{\gamma'}(\hat{p}) = \theta_{\hat{\gamma}_1}(\hat{p})$, which is a contradiction.
\end{proof}

\begin{proof}[Proof of Proposition \ref{embedded_characterization}(2)]
 For simplicity of description, we identify $\gamma$ with the unit circle $ \mathbb{S}^1 = \mathbb{R} / 2 \pi \mathbb{Z}$. Suppose now that there are two vertical points in $\gamma$ in the first quadrant, which after possibly reparametrizing we identify with the points $[0]$ and $[\pi]$ in $\mathbb{S}^1$. By assumption, we have that $x([0])$ and $x([\pi])$ are positive. Lemma \ref{higgins}  then gives an additional vertical point  on each arc $([0], [\pi])$ and $([\pi], [0])$, which  we identify with the points $[\frac{\pi}{2}]$ and $[ \frac{3 \pi}{2}]$ respectively. Now Lemma \ref{maximal_graphs}(3)  gives that there are horizontal  points along the arcs  $ ([0], [\frac{\pi}{2}])$, $([\frac{\pi}{2}], [\pi])$, $([\pi], [\frac{3 \pi}{2}])$ and $( [\frac{3 \pi}{2}], [0])$, which we identify with the four points $[\frac{(2k - 1)\pi}{4}]$, $k = 1, \ldots 4$. Lemma \ref{higgins} gives that both arcs $[[\frac{7 \pi}{4}], [\frac{\pi}{4}]]$ and $[[\frac{3 \pi}{4}], [\frac{5 \pi}{4}]]$ intersect the line $r = \sqrt{2(n - 1)}$, and after possibly relabeling, we can assume that $[[\frac{7 \pi}{4}], [\frac{\pi}{4}]]$ intersects to the left of  $[[\frac{3 \pi}{4}], [\frac{5 \pi}{4}]]$. Assume now that the arc $[[\frac{7 \pi}{4}], [\frac{\pi}{4}]]$ contains no vertical points other than $[0]$, and take  $\gamma_1 = [[\frac{7 \pi}{4}], [\frac{\pi}{4}]]$ and $\gamma_2 = [[\frac{\pi}{4}],[\frac{7 \pi}{4}]]$. We then translate $\gamma_1$ until a point of first contact with $\gamma_2$ and, arguing as in the proof of Proposition \ref{embedded_characterization}(1) obtain a contradiction.
\end{proof}  

 \begin{proof}[Proof of Proposition \ref{embedded_characterization}(3)]
Identify $\gamma$  with  an interval  $(a, b)$ under a Euclidean arc length parametrization,  and assume first that $a$ and $b$ are finite. Proposition \ref{embedded_characterization}(2) gives that $\gamma$ contains a finite number of vertical points. Lemma \ref{maximal_graphs}  then gives that $\gamma$ extends to the closed interval $[a, b]$ with vertical endpoints, and the assumption that $\gamma$ is complete in $H^+$ gives that these endpoints are  contained in the $x$-axis.

Now, suppose  $\gamma$ contains an interior vertical point  $c \in (a, b)$ , and assume that it is in the second quadrant. By Lemma \ref{maximal_graphs}(3) the arcs $[a, c]$ and $[c, b]$  each contain horizontal points $p_1$ and $p_2$, respectively, and consequently both intersect the line $r = \sqrt{2(n - 1)}$. Assume $[a, c]$ intersects to the  left of $[c, b]$, and assume $[p_1, p_2]$ contains no vertical points other than $c$. Then set $\gamma_1 = [a, p_1]$ and set $\gamma_2 = [p_1, b]$.

Note that $\gamma_1$ and $\gamma_2$ both intersect the line $r = \sqrt{2(n - 1)}$, and are compact. Moreover, the outward pointing tangent to $\gamma_1$ at $p_1$ is $-e_x$, and the outward pointing tangent to $\gamma_2$ at $p_1$ is $e_x$. As before, we translate $\gamma_1$ in the positive $e_x$ direction until a point of first contact with $\gamma_2$. By construction, this point of first contact cannot occur at $p_1$ (or its translated version). Moreover, by Lemma \ref{maximal_graphs}(2) it cannot occur at the endpoints of $\gamma_1$ and $\gamma_2$ contained in the $x$-axis. Hence, it is interior and non-transversal, and we obtain a contradiction as in the proof of Proposition \ref{embedded_characterization}(1) and (2).

Assume now that both $a$ and $b$ are infinite and identify $\gamma$ with the real line under a Euclidean arc length parametrization. Assume that $0$ is a vertical point in the second quadrant. Then by the completeness of $\gamma$, the arcs $(- \infty, 0]$, $[0, \infty)$ contain geodesic segments, maximally extended as graphs over the $x$-axis, and by Lemma \ref{maximal_graphs} both contain horizontal points $p_1$ and $p_2$, respectively. Assume as before that $[p_1, p_2]$ contains no vertical points other than $0$. By Proposition \ref{embedded_characterization}(2), $\gamma$ has a finite number of vertical points, and thus decomposes into a finite number of geodesic segments, maximally extended as graphs over the $x$-axis.  Then since $(-\infty, 0]$ and $[0, \infty)$  have  infinite Euclidean length, Lemma \ref{maximal_graphs} and Theorem \ref{ThmConicalEnds} imply that they contain the segments 
\begin{equation} \notag
\{(x, u_{\sigma_i}(x)) | x \geq 0 \},\quad i = 1, 2,
\end{equation}
 for distinct positive $\sigma_1$ and $\sigma_2$, respectively, after possibly reflecting through the $r$-axis. Thus, both $(- \infty, 0]$ and $[0, \infty)$ intersect the line $r = \sqrt{2(n - 1)}$ by Theorem \ref{ThmConicalEnds}, so assume that $(- \infty, 0]$ does so to the left of $[0, \infty)$. We then set $\gamma_1 := (-\infty, p_1]$ and $\gamma_2 = [p_1, \infty)$. As before $\gamma_1$ and $\gamma_2$ intersect the line $r = \sqrt{2(n - 1)}$, the outward pointing tangent to $\gamma_1$ at $p_1$ is $- e_x$, the outward pointing tangent to $\gamma_2$ at $p_1$ is $e_x$, and both curves $\gamma_1$ and $\gamma_2$ are properly embedded and separated by a positive distance (since $\sigma_1$ and $\sigma_2$ are distinct). We then translate $\gamma_1$ until a point of first contact with $\gamma_2$ and obtain a contradiction as in the previous case.
 
Finally, the case where the $(a, b) = [0, \infty)$, is handled exactly as in the previous cases, and  consequently we omit the details. 
    \end{proof}

We can now prove Theorem \ref{classification_theorem}.
 \begin{proof}[Proof of Theorem \ref{classification_theorem}]
 Note that by Proposition \ref{embedded_characterization} any non-closed embedded geodesic  $\gamma$ different from the $r$-axis cannot contain any interior vertical points and thus is globally given by the graph of a function $u(x)$ satisfying (\ref{SSODE}) on an open interval $I$ away from its endpoints. Let $\Sigma$ denote the surface of revolution determined by $\gamma$. Then $\Sigma$ is smooth and embedded and satisfies the self-shrinker equation (\ref{SSEq}). Lemma \ref{BlowUpLemma}  gives that $u'(x)x - u(x) < 0$, for all $x \in I$. This is in turn equivalent to the positivity of the mean curvature of $\Sigma$ with respect to the downward pointing unit normal (with respect to the axis of rotation). Huisken's classification of mean convex self-shrinkers \cite{Hu1} then implies that $\Sigma$ is either the round sphere of radius $\sqrt{2n}$, or the cylinder of radius $\sqrt{2(n - 1)}$. 
 
 If $\gamma$ is closed, then Proposition \ref{embedded_characterization} gives that it has at most two vertical points, and Lemma \ref{higgins} says that each is in a different quadrant of $H^+$. This concludes the proof.
   \end{proof}

\section{Appendix}
We include for completeness a proof of the smoothness of fixed points that we used in Lemma \ref{FixSmooth}.
\begin{proof}[Proof of Lemma \ref{FixSmooth}]
Let $\sigma$ be fixed, and let $x_{\sigma + h}, x_\sigma$ be fixed points for $\Phi_\sigma$ and $\Phi_{\sigma + h}$. Then
\begin{eqnarray} \notag
|x_{\sigma + h} - x_{\sigma}|  &= & |\Phi_{\sigma + h}(x_{\sigma + h}) - \Phi_\sigma(x_\sigma)| \\ \notag
& \leq & |\Phi_{\sigma + h}(x_{\sigma + h}) - \Phi_\sigma (x_{\sigma + h})| +  |\Phi_\sigma(x_{\sigma + h}) - \Phi_\sigma (x_\sigma)| \\ \notag
& \leq & \left|\frac{\partial \Phi_\sigma}{\partial\sigma} (\sigma, x_{\sigma + h})\right|h + \tau|x_{\sigma+ h} - x_\sigma| + o(h).
\end{eqnarray} 
This gives that the $x_\sigma$ are at least Lipshitz continuous functions of $\sigma$. To show differentiability, we again write
\begin{eqnarray*}
x_{\sigma + h} - x_{\sigma}  &= & \Phi_{\sigma + h}(x_{\sigma + h}) - \Phi_\sigma(x_\sigma) \\ \notag
& = & \Phi_{\sigma + h}(x_{\sigma + h}) - \Phi_\sigma (x_{\sigma + h}) +  \Phi_\sigma(x_{\sigma + h}) - \Phi_\sigma (x_\sigma) \\ \notag
& = & D_{x_\sigma}\Phi_\sigma (x_{\sigma + h} - x_\sigma) + O(|x_{\sigma + h} - x_\sigma|^2) +  \Phi_{\sigma + h}(x_{\sigma + h}) - \Phi_\sigma (x_{\sigma + h}).
\end{eqnarray*}
Rearranging terms, we see that
\begin{equation*}
\left(I - D_{x_\sigma} \Phi_\sigma - O(|x_{\sigma + h} - x_\sigma|)\right)(x_{\sigma + h} - x_{\sigma}) = \Phi_{\sigma + h}(x_{\sigma + h}) - \Phi_\sigma (x_{\sigma + h}).
\end{equation*}
Dividing by $h$ above and sending $h \rightarrow 0$, we get 
\begin{equation} \label{derivative_formula}
\frac{dx_\sigma}{d \sigma} = \big(I - D_{x_\sigma} \Phi_\sigma \big)^{-1}\frac{\partial \Phi}{\partial \sigma} (\sigma, x_\sigma).
\end{equation}
Note that the operator  $A = I - D_{x_\sigma} \Phi_\sigma$ is invertible, since the fact that $\Phi_\sigma$ is a contraction gives $||D_x \Phi||<1$.

Note that the formula for the derivative (\ref{derivative_formula}) gives that the fixed points $x_\sigma$ depend smoothly on the parameter $\sigma$, since the right hand side of $\sigma$ may be differentiated in $\sigma$ if the mappings $\Phi_\sigma$ are smooth.
\end{proof}

\bibliographystyle{amsalpha}

\end{document}